\theoremstyle{plain}
\newtheorem{theorem}{Theorem}[section]
\newtheorem*{theorem*}{Theorem}
\newtheorem{definition}[theorem]{Definition}
\newtheorem{prop}[theorem]{Proposition}
\newtheorem{cor}[theorem]{Corollary}
\newtheorem{rem}[theorem]{Remark}
\newtheorem{ex}[theorem]{Example}
\newtheorem*{mt*}{Main Theorem}
\newcommand\g{{\mathfrak{g}}}
\newcommand\ka{{\mathfrak{k}}}
\newcommand\z{{\mathfrak{z}}}
\renewcommand\O{{\omega}}
\renewcommand\t{{\theta}}
\newcommand\R{{\mathbb R}}
\newcommand\Z{{\mathbb Z}}
\newcommand\ad{{\rm ad}}
\begin{document}
\title{A class of Sasakian $5$-manifolds}
\author{Adri\'an Andrada, Anna Fino and Luigi Vezzoni}
\date{\today}

\address{Adri\'an Andrada: CIEM-FaMAF, Universidad
Nacional de C\'ordoba\\5000 C\'ordoba, Argentina}
\email{andrada@mate.uncor.edu}

\address{Anna Fino, Luigi Vezzoni: Dipartimento di Matematica \\ Universit\`a di Torino\\
Via Carlo Alberto 10 \\
10123 Torino\\ Italy} \email{annamaria.fino@unito.it, luigi.vezzoni@unito.it}
\subjclass{53C25, 22E60, 53C30}
\thanks{This work was partially supported by  Conicet, ANPCyT, Secyt-UNC (Argentina), MIUR,  GNSAGA (Italy) and  by the WWS Project  \lq \lq Geometria Riemanniana e Gruppi di Lie"}

\begin{abstract} 
We obtain some general results on Sasakian Lie algebras and prove as a consequence that a $(2n + 1)$-dimensional nilpotent Lie group admitting left-invariant Sasakian structures is isomorphic to the real Heisenberg group $H_{2n + 1}$. Furthermore, we classify Sasakian Lie algebras of dimension $5$ and determine which of them carry a Sasakian $\alpha$-Einstein structure. We show that a $5$-dimensional solvable Lie group with a left-invariant Sasakian structure and which admits a compact quotient by a discrete subgroup is isomorphic to either $H_5$ or a semidirect product $\R \ltimes (H_3 \times \R)$. In particular, the compact quotient is an $S^1$-bundle over a $4$-dimensional K\"ahler solvmanifold.
\end{abstract}

\maketitle

\section{Introduction}

A Sasakian structure is the analogous in odd dimensions of a K\"ahler structure. Indeed, by \cite{BG}  a Riemannian manifold $(M,g)$  of odd dimension $2n + 1$ admits a compatible Sasakian structure if and only if the  Riemannian cone $M\times \R^+$  is K\"ahler.

In dimension 3 a  homogeneous Sasakian manifold  has to be a Lie group endowed with a left-invariant Sasakian structure by \cite{Perrone}. Therefore the classification of $3$-dimensional Sasakian homogeneous spaces depends on the classification of $3$-dimensional Sasakian Lie algebras.

By \cite{PV} a compact, simply connected, $5$-dimensional homogeneous contact manifold is diffeomorphic to the $5$-dimensional sphere $S^5$ or to the product of two spheres $S^2 \times S^3$. Moreover, both $S^5$ and $S^2\times S^3$ carry Sasakian-Einstein structures (see \cite{BGK,OP}).
Other explicit examples of Sasakian-Einstein $5$-manifolds have been found in \cite{GMSW}, while toric Sasakian manifolds in dimension 5 have been 
studied in \cite{BGO} and \cite{Co}. A classification of Sasakian-Einstein $5$-manifolds of 
cohomogeneity $1$ has been obtained in \cite{Conti} and a
classification of $5$-dimensional  Lie groups  endowed with a left-invariant contact  structure was  obtained in \cite{Diatta}.

As far as we know in the literature the only result about $5$-dimensional  Lie groups admitting left-invariant  Sasakian structures is in the  case of  nilpotent Lie groups. Indeed, in \cite{Ugarte} it was shown that the only $5$-dimensional nilpotent Sasakian Lie algebra is the real Heisenberg Lie algebra $\mathfrak h_5$. It was proved in \cite{BGO} that the real Heisenberg Lie group $H_{2n + 1}$ admits no bi-invariant Sasakian structure, even if it has a left-invariant Sasakian structure as well as a right-invariant one. These two structures are different.

The aim of this paper is to classify $5$-dimensional Lie groups endowed with a left-invariant Sasakian structure. This is equivalent to determining all $5$-dimensional Sasakian Lie algebras. The  study of Sasakian Lie algebras  is done taking into account the center of the Lie algebra, which can only be trivial or $1$-dimensional. In this way we obtain some general results on Sasakian Lie algebras in any dimensions. In particular, we show that the only $(2n + 1)$-dimensional nilpotent Lie algebra admitting a Sasakian structure is the real Heisenberg Lie algebra ${\mathfrak h}_{2n + 1}$.


By using the previous general results and independently  from  the list obtained in \cite{Diatta},  we obtain a classification up to isomorphism of $5$-dimensional Sasakian Lie algebras.

\begin{mt*}\label{main}
Let $\g$ be a $5$-dimensional Lie algebra admitting a Sasakian structure. Then
\begin{itemize}
\item if $\g$ has non-trivial center $\mathfrak{z}(\g)$, then $\g$ is solvable with  $\dim \mathfrak{z}(\g)=1$ and the quotient $\g/\mathfrak{z}(\g)$ carries an induced K\"ahler structure $($see  Theorem $\ref{classwithcenter})$;
\vspace{0.1cm}
\item if $\g$ has trivial center, then it is isomorphic to one of the following Lie algebras:
the direct products $\mathfrak{sl}(2,\R)\times\mathfrak{aff}(\R)$, $\mathfrak{su}(2)\times\mathfrak{aff}(\R)$, or  the non-unimodular  solvable Lie algebra $\g_0 \cong \R^2 \ltimes {\mathfrak h}_3$ with structure equations
\begin{equation}\label{g0}
\begin{array}{l}
[e_1,e_3]=e_3\,,\quad [e_1,e_4]=\frac12 e_4\,,\quad [e_1,e_5]=\frac12 e_5,\\
{}[e_2, e_4]=e_5 \,,\quad [e_2,e_5]=-e_4, \quad \, [e_4, e_5] = - e_3,
 \end{array}
\end{equation}
where $\mathfrak{aff}(\R)$ is the Lie algebra of the Lie group of affine motions of $\R$ and  ${\mathfrak h}_3$ is the real $3$-dimensional Heisenberg Lie algebra $($see Theorem $\ref{classtrivialcenter})$.
\end{itemize}
\end{mt*}

As a consequence we obtain   that $\g$ is  either solvable or  a direct product  of  a  $3$-dimensional  semisimple ideal with the radical $\mathfrak{aff}(\R)$.

In the case of non-trivial center we determine the list of the
$5$-dimensional Sasakian Lie algebras by using the classification of $4$-dimensional K\"ahler Lie algebras given by Ovando in \cite{Ovando}.

Moreover, we prove that the only $5$-dimensional simply connected Lie groups with a left-invariant Sasakian structure which admit a compact quotient by a discrete subgroup are the real Heisenberg group $H_5$ or a semidirect product $\R \ltimes (\R \times H_3)$ (see Corollary \ref{compactquotient}). By \cite{Hasegawa} a solvmanifold, i.e. a compact quotient of a solvable Lie group by a discrete subgroup, endowed with a K\"ahler structure is a finite quotient of a complex torus. We show that a compact quotient of a  $5$-dimensional solvable Lie group with a left-invariant Sasakian structure by a uniform discrete subgroup  is an $S^1$-bundle over a $4$-dimensional K\"ahler solvmanifold.

By \cite[Proposition 4.2]{AFFU}  $\g_0$ is the only solvable (non-nilpotent) $5$-dimensional Lie algebra admitting a Sasakian $\alpha$-Einstein structure. We show that a $5$-dimensional Sasakian $\alpha$-Einstein Lie algebra is isomorphic either to ${\mathfrak h}_5, {\mathfrak g}_0$ or to ${\mathfrak {sl}} (2, \R) \times {\mathfrak {aff}} (\R)$.

Moreover, by \cite{Diatta2} it is known that a Lie algebra of dimension at least $5$ cannot admit a Sasakian-Einstein structure.

\smallskip

\noindent{\em{Acknowledgements}} We would like to thank Liviu Ornea for useful comments and the referees for helpful suggestions and remarks.

\section{Preliminaries}

A triple $(\Phi, \alpha,  \xi )$ on a $(2n+1)$-dimensional manifold $M$  is  an
{\em almost contact structure} if $\xi$ is a nowhere vanishing vector field,  $\alpha$ is a
$1$-form, and $\Phi$  is a tensor of type $(1, 1)$ such that
\begin{equation}\label{almostcontact}
\alpha(\xi) = 1,   \quad \Phi^2 = - {\rm I} + \xi \otimes \alpha.
\end{equation}
The vector field  $\xi$  defines the characteristic foliation $\mathcal F$  with  1-dimensional leaves,
and the kernel of  $\alpha$  defines the codimension one sub-bundle ${\mathcal D} = \ker  \alpha$. Then there is
the canonical splitting of the tangent bundle $TM$ of $M$
$$
TM = {\mathcal D} \oplus {\mathcal L} ,
$$
where ${\mathcal L}$  is the trivial line bundle generated by $\xi$. Note that conditions \eqref{almostcontact} imply
\begin{equation} \label{alphaphi}
\Phi(\xi)=0\,,\quad \alpha\circ\Phi=0\,.
\end{equation}

If the $1$-form  $\alpha$ satisfies the condition
$$
\alpha \wedge ({\rm d} \alpha)^n \neq 0,
$$
then the subbundle $\mathcal D$ defines a \emph{contact structure} on $M$. In this case the vector field  $\xi$
is called the {\em Reeb vector field} and $\alpha$ is called a \emph{contact form}. Contact structures can be considered as the odd-dimensional
counterpart of symplectic structures. If $\alpha$ is a contact form, then the associated Reeb vector field satisfies
\begin{equation} \label{dalpha}
{\rm d}\alpha(\xi,X)=0
\end{equation}
for any vector field $X$ on $M$.

Similarly to the case of an almost complex structure, there is the notion of integrability of an almost contact structure. Indeed, an almost contact structure $(\Phi, \alpha,  \xi )$ is called \emph{normal} if the Nijenhuis tensor $N_{\Phi}$ associated to the tensor $\Phi$ defined by
\begin{equation}\label{NPhi}
N_{\Phi} (X, Y) = {\Phi}^2 [X, Y] + [\Phi X, \Phi Y] - \Phi [ \Phi X, Y] - \Phi [X, \Phi Y],
\end{equation}
satisfies the condition
$$
N_{\Phi} =-{\rm d}\alpha\otimes \xi.
$$
This last condition is equivalent to requiring that the almost complex structure
\begin{equation} \label{complexproduct}
J \left( X, f  \frac {\partial} {\partial t} \right) = \left(\Phi X - f \xi,\alpha (X)  \frac {\partial} {\partial t} \right)
\end{equation}
on the product $M \times \R$ be integrable,  where $f$ is a smooth function on $ M \times \R$ and $t$ is the coordinate on $\R$ (see \cite{HS}).

A Riemannian metric $g$ on an almost contact manifold $(M, \Phi, \alpha,  \xi )$  is {\em compatible} with the almost contact structure if
$$
g (\Phi X, \Phi Y) = g(X, Y) - \alpha (X) \alpha(Y),
$$
for any vector fields $X, Y$. In this case the structure $(\Phi, \alpha,  \xi, g)$  is called an {\em almost contact metric} structure. Any almost contact structure admits a compatible metric.

An almost contact metric structure $(\Phi, \alpha,  \xi, g)$  is said to be {\em contact metric} if
$$
2g (X, \Phi Y) = {\rm d} \alpha (X, Y)\,.
$$
In this case $\alpha$ is a contact form and we denote
$$\omega (X, Y) = g (X, \Phi Y).$$

\smallskip

\begin{definition} \cite{Sasaki}
A Sasakian structure  is a normal contact metric  structure, i.e.  an almost contact metric structure  $(\Phi, \alpha, \xi, g)$ such that
$$
N_ {\Phi} = -{\rm d} \alpha \otimes \xi, \quad {\rm d} \alpha = 2 \omega.
$$
\end{definition}

A Sasakian structure can be also characterized in terms of the Riemannian cone over the manifold. More precisely, we recall that a Riemannian manifold $(M,g)$ admits a compatible Sasakian structure if and only if the cone $M\times \R^+$ equipped with the metric $h=t^2g+{\rm d}t\otimes{\rm d}t$ is K\"ahler (see for instance \cite{BG}). Furthermore, in this case the Reeb vector field is Killing and the covariant derivative of $\Phi$ with respect to the Levi-Civita connection of $g$ is given by
\[ (\nabla_X \Phi)(Y)=g(\xi, Y)X-g(X,Y)\xi \]
for any pair of vector fields $X$ and $Y$ on $M$ (see for instance \cite{BG}).

If a $(2n+ 1)$-dimensional manifold $M$ admits a Sasakian structure, the  product  metric on $M \times \R$ is compatible with the  complex structure $J$ given by \eqref{complexproduct} and moreover by \cite[Proposition 3.5]{Vaisman} the corresponding Hermitian structure is locally conformally K\"ahler  with parallel Lee form.

A contact metric structure $(\Phi, \alpha,  \xi,  g)$ is Sasakian if and only if its Riemannian curvature tensor satisfies the condition
$$
R(X, Y) \xi = \alpha (Y) X - \alpha (X) Y,
$$
for any vector fields $X$ and $Y$ (see for instance \cite{BL}). This implies that the Ricci tensor ${\rm Ric}_ g$ associated to a Sasakian metric satisfies
$$
{\rm Ric}_ g(\xi,X)=2n\,\alpha(X)
$$
for any vector field $X$ on $M$, where ${\dim M}=2n+1$. In particular
${\rm Ric}_ g(\xi,\xi)=2n$ and the metric $g$ is never Ricci-flat.

When the Ricci curvature tensor  is given by
${\rm Ric}_g =  \lambda g +  \nu \,\alpha \otimes \alpha$, for some constants $\lambda,\, \nu \in \R$, the Sasakian  structure  is  called  $\alpha$-Einstein (\cite{Oku62, Sas65,BGM}). In his original definition Okumura assumed that both $\lambda$ and $\nu$ are functions, and then he showed,  as for the Einstein metrics, that they must be constant when  the dimension of the manifold is greater than three. When $\nu  = 0$, the Sasakian structure  is called  Sasakian-Einstein. These structures have been intensively studied  by many authors (see for instance 
\cite{BG2,BGK,GMSW,GMSY} and  references therein). Finally, we recall that a $5$-dimensional manifold is Sasakian $\alpha$-Einstein if and only if it is Sasakian-hypo (see \cite{CS}).

\

\section{Sasakian Lie algebras}

In this section we will begin our study of left-invariant  Sasakian structures on Lie groups. Such a structure corresponds to a Sasakian structure on the associated Lie algebra.

\begin{definition}
A Sasakian structure on a Lie algebra $\g$ is a quadruple $(\Phi,\alpha,\xi,g)$, where $\Phi\in {\rm End}(\g)$, $\alpha\in\g^*$, $\xi\in \g$ and $g$
is an inner product on $\g$ such that
$$
\begin{aligned}
&\alpha(\xi) = 1,   \quad \Phi^2 = - {\rm I} + \xi \otimes \alpha\,,\quad  g (\Phi X, \Phi Y) = g(X, Y) - \alpha (X) \alpha(Y)\,,\\
&2g (X, \Phi Y) = {\rm d} \alpha (X, Y)\,,\quad N_ {\Phi} = -{\rm d} \alpha \otimes \xi\,,
\end{aligned}
$$
where $N_\Phi$ is defined as in \eqref{NPhi}. A Lie algebra equipped with a Sasakian structure will be called a \emph{Sasakian Lie algebra.} The vector $\xi$ will be called the \emph{Reeb vector}.
\end{definition}

\begin{rem} {\rm We note that, in this setting, formula \eqref{dalpha} reads 
$$ \alpha(\xi,X)=0$$
for any $X\in\mathfrak{g}$}.
\end{rem}

\begin{ex}\emph{The classical example of a Sasakian Lie algebra is given by the $(2n+1)$-dimensional real Heisenberg Lie algebra $\mathfrak{h}_{2n+1}$.
We recall that $\mathfrak{h}_{2n+1}={\rm Span}\{X_1,\dots,X_n,Y_1,\dots,Y_n,Z\}$, where
$$
[X_i,Y_i]=Z\,,
$$
for $i=1,\dots, n$; in this case, a Sasakian structure is defined by
$$
\Phi(X_i)=Y_i\,,\,\,\Phi(Y_i)=-X_i\,,\,\, \Phi(Z)=0\,,\,\,i=1,\dots, n\,,
$$
the inner product $g$ is obtained by declaring the basis above orthonormal, $\xi=Z$ and $\alpha$ is the dual $1$-form of $Z$ with respect to the
metric $g$.}
\end{ex}

In general for a Lie algebra $\mathfrak g$ with a contact structure $\alpha$ we
can prove the following property for its center $\z (\mathfrak g)$.
\begin{prop}
Let $(\g,\alpha)$ be a contact Lie algebra with $\xi$ its Reeb vector and let $\z (\mathfrak g)$ be the center of $\g$. Then
\begin{enumerate}
\item[1.] $\dim \z (\mathfrak g)\leq 1\,;$
\vspace{0.1cm}
\item[2.] if $\dim \z (\mathfrak g)=1$, then $ \z(\mathfrak g)=\R\,\xi$.
\end{enumerate}
\end{prop}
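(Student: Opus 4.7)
The plan is to translate the centrality of an element $Z\in\z(\g)$ into the vanishing of the contraction $i_Z\,\di\alpha$, and then to argue that this vanishing already forces $Z$ to lie on the Reeb line $\R\,\xi$.

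For the first step, I would use the Chevalley--Eilenberg formula for the exterior differential on a Lie algebra, namely
\[
\di\alpha(X,Y)=-\alpha([X,Y])\qquad\text{for all }X,Y\in\g.
\]
If $Z\in\z(\g)$, then $[Z,X]=0$ for every $X\in\g$, and consequently $\di\alpha(Z,X)=0$ for every $X$; that is, $i_Z\,\di\alpha=0$.

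The second step is the main technical point. I would analyze the kernel
\[
\mathcal K=\{X\in\g : i_X\,\di\alpha=0\}
\]
of $\di\alpha$ viewed as a skew bilinear form on the $(2n+1)$-dimensional vector space $\g$. The contact condition $\alpha\wedge(\di\alpha)^n\neq 0$ implies in particular $(\di\alpha)^n\neq 0$, so $\di\alpha$ has maximal possible rank $2n$ on $\g$; if $\dim\mathcal K$ were at least $2$, then the rank of $\di\alpha$ would be at most $2n-1$ and $(\di\alpha)^n$ would vanish, contradicting the contact condition. Hence $\dim\mathcal K=1$. By \eqref{dalpha} we have $\xi\in\mathcal K$, and since $\xi\neq 0$, we conclude $\mathcal K=\R\,\xi$.

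Combining the two steps, $\z(\g)\subseteq\mathcal K=\R\,\xi$, which immediately yields $\dim\z(\g)\leq 1$, proving~(1). If moreover $\dim\z(\g)=1$, the inclusion $\z(\g)\subseteq\R\,\xi$ between one-dimensional spaces is an equality, so $\z(\g)=\R\,\xi$, proving~(2). The only nontrivial ingredient is the rank argument identifying $\ker\di\alpha$ with $\R\,\xi$; everything else is a direct consequence of the definitions.
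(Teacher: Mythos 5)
Your proof is correct and follows essentially the same route as the paper: both arguments reduce to the observation that a central element lies in the kernel of $\di\alpha$ (via $\di\alpha(Z,\cdot)=-\alpha([Z,\cdot])=0$) and that this kernel is exactly $\R\,\xi$ by the contact condition. The only cosmetic difference is that you identify $\ker \di\alpha$ on all of $\g$ by a rank count, whereas the paper decomposes $Z=a\,\xi+X$ with $X\in\ker\alpha$ and invokes the non-degeneracy of $\di\alpha$ on $\ker\alpha$; this lets you obtain both items at once from the single inclusion $\z(\g)\subseteq\R\,\xi$.
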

\begin{proof}
The first item is well known and follows from the fact that ${\rm d}\alpha$ is non-degenerate on  $\ker\alpha$. For the second item we
fix an arbitrary generator $Z$ of $\z (\mathfrak g)$. We may write
$Z=a\,\xi+X$, where $a\in\R$ and $X\in \ker\alpha$. We have
$$
0=\alpha([Z,Y])=\alpha([a\xi+X,Y])=\alpha([X,Y])=-{\rm d}\alpha(X,Y)
$$
for all $Y\in \ker\alpha$. By the non-degeneracy of ${\rm d}\alpha$ on $\ker\alpha$, it follows that $X=0$ and, consequently, $\z (\mathfrak g)=\R\,\xi$.
\end{proof}

\begin{rem} {\rm We recall that due to \cite[Theorem 5]{BW}, the only semisimple Lie algebras admitting a contact form are $\mathfrak{su}(2)$ and $\mathfrak{sl}(2,\R)$}.
\end{rem}

\medskip

\subsection{Non-trivial center}

We show that in the case of  Sasakian Lie algebras with non-trivial center  the kernel of the contact form
inherits a natural structure of K\"ahler Lie algebra. Moreover two Sasakian Lie algebras are isomorphic if and only if the corresponding K\"ahler Lie algebras are equivalent. This allows us to use the classification of the $4$-dimensional K\"ahler Lie algebras of \cite{Ovando} to classify $5$-dimensional Sasakian Lie algebras with non-trivial center.

We start by considering the following
\begin{prop}
Let $(\g,\Phi,\alpha,\xi,g)$ be a Sasakian Lie algebra with non-trivial center $\mathfrak{z}(\g)$ generated by $\xi$. Then
the quadruple $(\ker\alpha,\theta,  \Phi,g)$ is a K\"ahler Lie algebra, where $\t$ is the component of the Lie bracket of $\g$ on $\ker\alpha$.
\end{prop}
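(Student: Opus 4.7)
The plan is to verify directly each of the pieces that make up a K\"ahler structure on $\ker\alpha$, starting from the Sasakian axioms on $\g$. First I would fix the decomposition $\g = \ker\alpha \oplus \R\,\xi$ and write, for $X,Y \in \ker\alpha$,
\[
[X,Y]_\g = \theta(X,Y) + \alpha([X,Y])\,\xi,
\]
so that $\theta$ is by definition the $\ker\alpha$-component of the bracket. Since $\Phi\xi = 0$ and $\alpha\circ\Phi = 0$, the endomorphism $J := \Phi|_{\ker\alpha}$ preserves $\ker\alpha$ and satisfies $J^2 = -I$ there; moreover the compatibility $g(\Phi X, \Phi Y) = g(X,Y) - \alpha(X)\alpha(Y)$ restricts to $g(JX,JY) = g(X,Y)$ on $\ker\alpha$, so $g|_{\ker\alpha}$ is Hermitian with respect to $J$. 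These are the easy bookkeeping items.

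Next I would check that $\theta$ satisfies the Jacobi identity. The crucial observation is that $\xi \in \z(\g)$ kills the $\xi$-component of $[X,Y]_\g$ under any further bracket, so Jacobi for $[\cdot,\cdot]_\g$ projects cleanly onto Jacobi for $\theta$. With $(\ker\alpha, \theta)$ now a Lie algebra, I would consider the fundamental $2$-form $\omega(X,Y) = g(X,\Phi Y)$ of the contact metric structure, which satisfies $2\omega = d\alpha$ and hence $d\omega = 0$ on $\g$. To deduce closedness of $\omega|_{\ker\alpha}$ on $(\ker\alpha,\theta)$, I would use $\omega(\xi,\cdot) = 0$, which follows from $\alpha\circ\Phi = 0$ together with $g(\xi,Y) = \alpha(Y)$ (a direct consequence of the compatibility formula applied with $X = \xi$). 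With this, for $X,Y,Z \in \ker\alpha$ the $\xi$-parts of $[X,Y]_\g$, $[Y,Z]_\g$, $[Z,X]_\g$ do not contribute to $d\omega$, so $d\omega$ restricts to the Chevalley--Eilenberg differential $d_\theta\omega$ on $\ker\alpha$, which must therefore vanish.

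The main technical point, and the one I expect to be the principal obstacle, is the integrability of $J$ with respect to $\theta$. I would compute the Nijenhuis tensor $N_J^\theta$ of $J$ on $(\ker\alpha,\theta)$ and compare it term by term with the normality relation $N_\Phi = -d\alpha\otimes\xi$ on $\g$. For $X,Y \in \ker\alpha$, each of the four terms of $N_\Phi(X,Y)$ decomposes along $\g = \ker\alpha \oplus \R\xi$: the $\xi$-components of $[\Phi X,Y]_\g$ and $[X,\Phi Y]_\g$ are killed by the outer $\Phi$, the $\xi$-component of $[X,Y]_\g$ is killed by $\Phi^2$, and what remains in $\ker\alpha$ is exactly $N_J^\theta(X,Y)$. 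Since the right-hand side $-d\alpha(X,Y)\,\xi$ has trivial $\ker\alpha$-component, projecting onto $\ker\alpha$ yields $N_J^\theta = 0$. Assembling these pieces produces the K\"ahler Lie algebra structure $(\ker\alpha,\theta, J, g|_{\ker\alpha})$ claimed by the proposition.
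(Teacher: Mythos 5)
Your proof is correct and follows essentially the same route as the paper: decompose the bracket along $\g=\ker\alpha\oplus\R\,\xi$, use the centrality of $\xi$ to make the Jacobi identity of $\g$ project onto the Jacobi identity for $\theta$ in the $\ker\alpha$-component and onto ${\rm d}\omega=0$ in the $\xi$-component. You are in fact more thorough than the paper, which leaves the integrability of $\Phi|_{\ker\alpha}$ with respect to $\theta$ implicit, whereas you correctly obtain $N^{\theta}_{J}=0$ by projecting the normality condition $N_\Phi=-{\rm d}\alpha\otimes\xi$ onto $\ker\alpha$.
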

\begin{proof}
Let $X,Y,Z$ in $\ker\alpha$, then
$$
\begin{aligned}
0=&[X,[Y,Z]]+[Z,[X,Y]]+[Y,[Z,X]]\\
 =&[X,\t(Y,Z)+\alpha([Y,Z])\,\xi]+[Z,\t(X,Y)+\alpha([X,Y])\,\xi]\\
  &\quad+[Y,\t(Z,X)+\alpha([Z,X])\,\xi]\\
 =&[X,\t(Y,Z)]+[Z,\t(X,Y)]+[Y,\t(Z,X)]\\
 =&\t(X,\t(Y,Z))+\t(Z,\t(X,Y))+\t(Y,\t(Z,X))\\
  &+\alpha([X,\t(Y,Z)]+[Z,\t(X,Y)]+[Y,\t(Z,X)])\,\xi
\end{aligned}
$$
i.e.
$$
\begin{aligned}
\t(X,\t(Y,Z))+\t(Z,\t(X,Y))+\t(Y,\t(Z,X))=0\,,\quad {\rm d}\omega=0\,,
\end{aligned}
$$
$2 \omega$ being the restriction of ${\rm d}\alpha$ on $\ker\alpha$. Then $\theta$ defines a Lie bracket on $\ker \alpha$. Since $\omega$ is non-degenerate, the statement holds.
\end{proof}
Conversely, let $(\mathfrak{h}, [ \, , \,  ]_{\mathfrak h}, \O, \Phi,g)$ be a K\"ahler Lie algebra and set $\g=\mathfrak{h}\oplus \R\,\xi$. Then defining
$$
[X,Y]=[X,Y]_{\mathfrak{h}}-\O(X,Y)\,\xi
$$
for $X,Y \in \mathfrak{h}$ and
$$
[\xi,\mathfrak{h}]=0
$$
we obtain a new Lie algebra  $({\mathfrak g}, [ \, , \, ])$ equipped with a natural Sasakian structure, where the contact form $\alpha$ on $\g$ is defined as
$$
\alpha(a\,\xi+X)=a
$$
for all $X\in\mathfrak{h}$ and $\Phi$ and $g$ are extended in a natural way.
\begin{cor} \label{quotient}
Let $(\g,\Phi,\alpha,\xi,g)$ be a Sasakian Lie algebra with non-trivial center $\mathfrak{z}(\g)$ generated by $\xi$. Then
$\g/\mathfrak{z}(\g)$ inherits a natural K\"ahler structure.
\end{cor}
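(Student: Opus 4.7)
The plan is to deduce the corollary directly from the preceding proposition by identifying the quotient Lie algebra $\g/\mathfrak{z}(\g)$ with $(\ker\alpha, \theta)$ as Lie algebras, and then transporting the Kähler structure constructed on $\ker\alpha$ to the quotient via this identification.

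First I would observe that, since $\alpha(\xi)=1$, there is a vector space decomposition $\g = \ker\alpha \oplus \R\,\xi$, and by hypothesis $\mathfrak{z}(\g)=\R\,\xi$. Therefore the composition of the inclusion $\ker\alpha \hookrightarrow \g$ with the projection $\g \to \g/\mathfrak{z}(\g)$ is a linear isomorphism $\pi\colon \ker\alpha \to \g/\mathfrak{z}(\g)$. For $X,Y\in\ker\alpha$ the bracket in $\g$ decomposes as
\[
[X,Y] = \theta(X,Y) + \alpha([X,Y])\,\xi,
\]
where $\theta(X,Y)\in\ker\alpha$, so modulo $\mathfrak{z}(\g)=\R\,\xi$ one gets $\pi(\theta(X,Y)) = [\pi(X),\pi(Y)]_{\g/\mathfrak{z}(\g)}$. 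Hence $\pi$ is an isomorphism of Lie algebras between $(\ker\alpha,\theta)$ and $\g/\mathfrak{z}(\g)$.

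Next, by the preceding proposition the quadruple $(\ker\alpha,\theta,\Phi|_{\ker\alpha},g|_{\ker\alpha})$ is a Kähler Lie algebra, with Kähler form $\omega = \tfrac{1}{2}\,{\rm d}\alpha|_{\ker\alpha}$ (using $2g(X,\Phi Y)={\rm d}\alpha(X,Y)$ and $\Phi(\ker\alpha)\subseteq \ker\alpha$ which follows from $\alpha\circ\Phi=0$ in \eqref{alphaphi}). Transporting $(\Phi|_{\ker\alpha},g|_{\ker\alpha},\omega)$ along the Lie algebra isomorphism $\pi$ defines a complex structure, an inner product and a symplectic form on $\g/\mathfrak{z}(\g)$ satisfying all the compatibility conditions of a Kähler structure, since $\pi$ is a vector space isomorphism and the defining algebraic identities are preserved. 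This is the desired Kähler structure on $\g/\mathfrak{z}(\g)$.

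Essentially no obstacle is expected: the content of the statement is already contained in the proposition, and the only thing to verify is the tautological identification $\g/\mathfrak{z}(\g)\cong\ker\alpha$ as Lie algebras with bracket $\theta$. The mild point worth stating carefully is that $\Phi$ preserves $\ker\alpha$, which is immediate from $\alpha\circ\Phi=0$, so that the Kähler data really do descend to the quotient.
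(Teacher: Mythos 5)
Your proposal is correct and is exactly the argument the paper intends: the corollary is stated as an immediate consequence of the preceding proposition, via the tautological Lie algebra isomorphism $\ker\alpha\cong\g/\mathfrak{z}(\g)$ (the bracket $\theta$ matching the quotient bracket because the $\xi$-component is killed modulo $\mathfrak{z}(\g)=\R\,\xi$). Your explicit verification that $\Phi$ preserves $\ker\alpha$ and that the Kähler data transport along $\pi$ is a careful spelling-out of what the paper leaves implicit.
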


Now we have the following easy-to-prove proposition which will be used in $\S$\ref{5}.
\begin{prop} \label{equivalence}
Two Sasakian Lie algebras with non-trivial center $(\g_1,\Phi_1,\alpha_1,\xi_1,g_1)$, $(\g_2,\Phi_2,\alpha_2,\xi_2,g_2)$  are isomorphic
if and only  $\ker\alpha_1$ and $\ker\alpha_2$ are isomorphic as K\"ahler Lie algebras.
\end{prop}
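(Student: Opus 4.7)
The plan is to directly construct, in each direction, the relevant isomorphism from one side to the other, exploiting the decomposition $\g_i = \ker\alpha_i \oplus \R\,\xi_i$ and the bracket formula
$$[X,Y]_{\g_i} = \theta_i(X,Y) - 2\omega_i(X,Y)\,\xi_i$$
for $X,Y\in \ker\alpha_i$, which comes from writing $\alpha_i([X,Y])=-{\rm d}\alpha_i(X,Y)=-2\omega_i(X,Y)$. The key structural observation I will use throughout is that, since $\xi_i$ generates $\z(\g_i)$, the bracket $[\xi_i,\cdot]_{\g_i}$ vanishes identically, so the bracket on $\g_i$ is entirely determined by $(\theta_i,\omega_i)$ on $\ker\alpha_i$, and the Sasakian data on $\g_i$ are determined by the K\"ahler data on $\ker\alpha_i$ together with the convention $\alpha_i(\xi_i)=1$.

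For the direct implication, I assume $F\colon\g_1\to\g_2$ is a Sasakian isomorphism, i.e.\ a Lie algebra isomorphism with $F^*\alpha_2=\alpha_1$, $F(\xi_1)=\xi_2$, $F\circ\Phi_1=\Phi_2\circ F$ and $F^*g_2=g_1$. The first condition gives $F(\ker\alpha_1)=\ker\alpha_2$. Restricting $F$ to $\ker\alpha_1$ produces a linear isomorphism onto $\ker\alpha_2$ preserving $\Phi$ and $g$ (hence also $\omega$, since $\omega(X,Y)=g(X,\Phi Y)$). To see it is a Lie algebra isomorphism for the bracket $\theta_i$, I project the identity $F([X,Y]_{\g_1})=[F(X),F(Y)]_{\g_2}$ onto $\ker\alpha_2$ along $\xi_2$, using that $F$ preserves this splitting, to obtain $F(\theta_1(X,Y))=\theta_2(F(X),F(Y))$. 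Thus $F|_{\ker\alpha_1}$ is an isomorphism of K\"ahler Lie algebras.

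For the converse, given a K\"ahler isomorphism $\phi\colon(\ker\alpha_1,\theta_1,\omega_1,\Phi_1,g_1)\to(\ker\alpha_2,\theta_2,\omega_2,\Phi_2,g_2)$, I define $F\colon\g_1\to\g_2$ by $F(X+a\xi_1)=\phi(X)+a\xi_2$ for $X\in\ker\alpha_1$ and $a\in\R$. The preservation of $\alpha$ and $\xi$ is immediate from the definition; preservation of $\Phi$ follows from $\Phi_i(\xi_i)=0$ together with $\phi\circ\Phi_1=\Phi_2\circ\phi$; and preservation of $g$ follows from $\phi$ being an isometry on $\ker\alpha_i$, together with the fact (forced by the compatibility $g(\Phi X,\Phi Y)=g(X,Y)-\alpha(X)\alpha(Y)$) that $\xi_i$ is a unit vector orthogonal to $\ker\alpha_i$, so both $\g_i$ split orthogonally as $\ker\alpha_i\oplus\R\,\xi_i$. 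Finally, to check that $F$ is a Lie algebra homomorphism, one only needs to verify $F([X,Y]_{\g_1})=[F(X),F(Y)]_{\g_2}$ on pairs from $\ker\alpha_1$, since $\xi_1$ is central; using the bracket formula recalled above, this identity reduces to $\phi\circ\theta_1=\theta_2\circ(\phi\times\phi)$ and $\phi^*\omega_2=\omega_1$, both of which are part of $\phi$ being a K\"ahler isomorphism.

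There is no substantive obstacle here: the statement is essentially a matter of bookkeeping, and the only point worth extra care is the translation between the bracket on $\g_i$ and the pair $(\theta_i,\omega_i)$, i.e.\ remembering the sign $\alpha([X,Y])=-{\rm d}\alpha(X,Y)$ and using that $\xi_i$ is both central and $g_i$-orthogonal to $\ker\alpha_i$, so that the splitting $\g_i=\ker\alpha_i\oplus\R\,\xi_i$ is simultaneously Lie-algebraic, metric and contact.
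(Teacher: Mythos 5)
Your proof is correct and follows essentially the approach the paper intends: the paper itself omits the argument (calling the proposition ``easy-to-prove''), and your verification via the splitting $\g_i=\ker\alpha_i\oplus\R\,\xi_i$ and the bracket formula $[X,Y]=\theta(X,Y)-2\omega(X,Y)\xi$ is exactly the bookkeeping implicit in the two constructions the paper gives just before the statement. The only point worth making explicit, which you do, is that $\xi_i$ is automatically unit and $g_i$-orthogonal to $\ker\alpha_i$ by the compatibility condition, so the splitting is simultaneously Lie-algebraic, metric and contact.
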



Since a nilpotent Lie algebra has always non-trivial center, we can apply the results above in order to determine all  nilpotent Lie algebras admitting a Sasakian structure. It is known that in dimensions 3 and 5 the only nilpotent Sasakian Lie algebras are the real Heisenberg algebras $\mathfrak h_{3}$ and $\mathfrak h_{5}$, respectively (see \cite{Geiges} and \cite[Corollary 5.5]{Ugarte}). We show next that this still holds in any dimension.

\begin{theorem}
Let $\g$ be a $(2n+1)$-dimensional nilpotent Lie algebra admitting a Sasakian structure. Then $\g$ is isomorphic to the $(2n+1)$-dimensional Heisenberg Lie algebra.
\end{theorem}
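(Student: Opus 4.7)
The plan is to combine the structural result of Corollary \ref{quotient} with the classical fact that a nilpotent Lie algebra admitting a K\"ahler structure must be abelian (Benson--Gordon / Hasegawa). Since $\g$ is nilpotent its center is non-trivial, so by the preceding proposition $\mathfrak{z}(\g)=\R\,\xi$ is exactly one-dimensional. By Corollary \ref{quotient}, the quotient $\g/\mathfrak{z}(\g)\cong(\ker\alpha,\t)$ inherits a K\"ahler Lie algebra structure, and this quotient is still nilpotent (as a quotient of a nilpotent Lie algebra). Invoking the Benson--Gordon theorem, $(\ker\alpha,\t)$ must be abelian, i.e.\ $\t\equiv 0$.

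Next I would translate $\t=0$ back into the original bracket of $\g$. By the very definition of $\t$, for all $X,Y\in\ker\alpha$ one has
\[
[X,Y]=\t(X,Y)+\alpha([X,Y])\,\xi=\alpha([X,Y])\,\xi\in\R\,\xi,
\]
and of course $[\xi,\g]=0$ since $\xi\in\mathfrak{z}(\g)$. Thus $\g$ is $2$-step nilpotent with $[\g,\g]\subseteq\R\,\xi$. Moreover, the skew bilinear form $b(X,Y):=\alpha([X,Y])=-\di\alpha(X,Y)$ on $\ker\alpha$ is non-degenerate, because $\alpha$ is a contact form and so $\di\alpha$ is non-degenerate on $\ker\alpha$.

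Finally I would conclude by the standard classification: any $2$-step nilpotent Lie algebra with one-dimensional commutator ideal $\R\,\xi$ and non-degenerate skew form $b$ on a complement of the center is isomorphic to $\mathfrak{h}_{2n+1}$. Concretely, choose a symplectic basis $\{X_1,\dots,X_n,Y_1,\dots,Y_n\}$ of $(\ker\alpha,b)$ with $b(X_i,Y_j)=\delta_{ij}$ and $b(X_i,X_j)=b(Y_i,Y_j)=0$; together with $\xi$ this basis satisfies exactly the Heisenberg relations $[X_i,Y_i]=\xi$, all other brackets vanishing.

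The main obstacle is Step 2, the appeal to the Benson--Gordon theorem that a nilpotent K\"ahler Lie algebra is abelian; everything else is a direct bookkeeping of the already-established structure. If one wished to avoid citing that result, one would have to reprove it, e.g.\ via Nomizu's theorem comparing Lie-algebra cohomology with the cohomology of the associated nilmanifold and using the hard-Lefschetz/formality obstructions of K\"ahler geometry, which is a substantial detour.
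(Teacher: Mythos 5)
Your argument is correct and follows the same skeleton as the paper's proof: nilpotency forces $\z(\g)=\R\,\xi$, the quotient $\g/\z(\g)$ is a nilpotent K\"ahler Lie algebra which must be abelian, and then $\g$ is two-step nilpotent with one-dimensional commutator $\R\,\xi$ and non-degenerate form $\alpha([\cdot,\cdot])$ on $\ker\alpha$, hence Heisenberg (the paper leaves this last bookkeeping implicit; your symplectic-basis argument is a welcome expansion). The one point of divergence is the justification of the key lemma \lq\lq nilpotent K\"ahler $\Rightarrow$ abelian.\rq\rq\ You invoke Benson--Gordon/Hasegawa, but those theorems are stated for \emph{compact} nilmanifolds: they rely on Nomizu's isomorphism and Hodge-theoretic properties (hard Lefschetz, formality) that require a compact quotient, hence a lattice, hence rational structure constants --- which an arbitrary nilpotent K\"ahler Lie algebra need not have. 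The paper sidesteps this by a purely left-invariant chain: a nilpotent Lie algebra is unimodular, Hano's theorem says a unimodular K\"ahler Lie group is flat, and a flat left-invariant metric on a nilpotent Lie group (by Milnor's description of flat Lie groups) forces the algebra to be abelian. So either patch your citation to the Lie-algebra-level statement (Hano plus Milnor, as in the paper) or add the hypothesis that lets Benson--Gordon apply; as written the appeal to the compact-nilmanifold theorem is a small but genuine gap, not in the idea but in the reference carrying the load.
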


\begin{proof}
Let $(\Phi,\alpha,\xi,g)$ be a Sasakian structure on $\g$. Since $\g$ is nilpotent it has non-trivial center
$\mathfrak{z}(\g)= \R\,\xi$. The quotient $\g/\mathfrak{z}(\g)$ is a K\"ahler nilpotent Lie algebra, hence it is unimodular, and therefore, using a result of Hano \cite{Hano}, it is  flat. As a consequence, $\g/\mathfrak{z}(\g)$ is abelian. This implies immediately that $\g$ is isomorphic to the Heisenberg Lie algebra.
\end{proof}

\begin{rem}
\emph{According to \cite{TV} the $(2n+1)$-dimensional Heisenberg Lie algebra admits a contact Calabi-Yau structure. In particular $\mathfrak{h}_{2n+1}$ admits Sasakian $\alpha$-Einstein structures.}
\end{rem}

\medskip

\subsection{Trivial center}

In the case the Sasakian Lie algebra $\mathfrak g$ has  trivial center, we have the following properties for $\ad_{\xi}$.

\begin{prop} \label{kerad}
Let $(\g,\Phi,\alpha,\xi,g)$ be a Sasakian Lie algebra. Then
\begin{enumerate}
\item[1.] $\ad_{\xi} \Phi= \Phi \,\ad_{\xi}$, and therefore $\ker\ad_\xi$ and ${\rm Im}\,\ad_\xi$ are $\Phi$-invariant subspaces of $\g$\,;
\vspace{0.1cm}
\item[2.] $\ad_\xi \Phi$ is symmetric with respect to $g\,;$
\vspace{0.1cm}
\item[3.]  $\ad_{\xi}$ is skew-symmetric with respect to $g$ and therefore $({\rm Im}\,\ad_\xi)^{\perp}=\ker \ad_\xi\,.$
\end{enumerate}
\end{prop}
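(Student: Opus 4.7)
The three items can be proved in the order (1), (3), (2): I would obtain (1) from the normality condition, (3) from the fact that the Reeb vector field is Killing (recorded in the preliminaries as a consequence of the Sasakian axioms), and (2) as a formal consequence of (1), (3) and the skew-symmetry of $\Phi$ with respect to $g$.

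For (1), I evaluate $N_\Phi(\xi,Y)$ for an arbitrary $Y\in\g$ using \eqref{NPhi}. Since $\Phi\xi=0$ by \eqref{alphaphi}, the last three brackets in the definition collapse and what survives is
$$N_\Phi(\xi,Y)=\Phi^2[\xi,Y]-\Phi[\xi,\Phi Y].$$
By the normality condition together with ${\rm d}\alpha(\xi,\cdot)=0$ (which, in the Lie algebra setting, reads $\alpha\circ\ad_\xi=0$; see the remark after the definition of Sasakian Lie algebra) this expression vanishes. Expanding $\Phi^2=-{\rm I}+\xi\otimes\alpha$ and using $\alpha\circ\ad_\xi=0$ once gives $[\xi,Y]=-\Phi[\xi,\Phi Y]$; applying $\Phi$ to this identity and using the same two facts again yields $\Phi[\xi,Y]=[\xi,\Phi Y]$, i.e.\ $\Phi\,\ad_\xi=\ad_\xi\Phi$. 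The $\Phi$-invariance of $\ker\ad_\xi$ and ${\rm Im}\,\ad_\xi$ is immediate from this commutation.

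For (3), the preliminaries record that $\xi$ is Killing, i.e.\ $\mathcal{L}_\xi g=0$; specialising to the left-invariant metric $g$ on the Lie group this reads $g(\ad_\xi X,Y)+g(X,\ad_\xi Y)=0$ for all $X,Y\in\g$, which is the skew-symmetry. The orthogonal-complement equality then follows from the general fact that $({\rm Im}\,A)^\perp=\ker A^T$ for any endomorphism $A$, specialised to $A^T=-A$. For (2), I simply compute $(\ad_\xi\Phi)^T=\Phi^T\,\ad_\xi^T$: the antisymmetry of $\omega(X,Y)=g(X,\Phi Y)$ forces $\Phi^T=-\Phi$ (a short consequence of $g(\Phi X,\Phi Y)=g(X,Y)-\alpha(X)\alpha(Y)$ and $\Phi\xi=0$), part (3) gives $\ad_\xi^T=-\ad_\xi$, and (1) then yields $(\ad_\xi\Phi)^T=\Phi\,\ad_\xi=\ad_\xi\Phi$, so $\ad_\xi\Phi$ is $g$-symmetric.

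The main obstacle is the manipulation in (1): one must extract a commutation of operators out of the single identity $N_\Phi(\xi,Y)=0$, and this requires applying $\Phi$ and re-using both $\Phi^2=-{\rm I}+\xi\otimes\alpha$ and $\alpha\circ\ad_\xi=0$ in the right order. Items (2) and (3) are then essentially linear algebra, modulo the Sasakian Killing property of $\xi$ that is quoted from the preliminaries.
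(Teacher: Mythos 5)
Your proof is correct, and item (1) is argued exactly as in the paper: evaluate $N_\Phi(\xi,\cdot)$, use $\Phi\xi=0$ and $\alpha\circ\ad_\xi=0$, and apply $\Phi$ once more to extract $\ad_\xi\Phi=\Phi\,\ad_\xi$. Where you genuinely diverge is in the order and source of items (2) and (3). The paper proves (2) first, by a self-contained algebraic computation: starting from $2g([\xi,\Phi X],Y)=-{\rm d}\alpha([\xi,X],Y)=\alpha([[\xi,X],Y])$, it invokes the Jacobi identity to move the bracket with $\xi$ onto $Y$ and lands on $2g(X,[\xi,\Phi Y])$; it then deduces (3) by writing an arbitrary $X$ as $a\xi+\Phi X'$ and combining (2) with the $g$-skew-symmetry of $\Phi$. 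You instead get (3) directly by quoting the Killing property of the Reeb field from the preliminaries (which, for left-invariant data, is precisely the skew-symmetry of $\ad_\xi$), and then obtain (2) by the one-line transpose computation $(\ad_\xi\Phi)^T=\Phi^T\ad_\xi^T=\Phi\,\ad_\xi=\ad_\xi\Phi$. Both routes are valid: yours is shorter and makes the linear-algebraic relationship between (1), (2), (3) and the skew-symmetry of $\Phi$ transparent, at the cost of importing a manifold-level fact (Sasakian $\Rightarrow$ K-contact) whose proof is not reproduced in the paper; the paper's route stays entirely inside the algebraic definition of a Sasakian Lie algebra, and in effect its computation of (2) is an ab initio algebraic proof of the Killing property in this setting. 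Note also that the implication you use, (3) together with (1) gives (2), is immediate, whereas the paper's converse direction (2)$\Rightarrow$(3) needs the small extra decomposition argument, since $\Phi$ has the nontrivial kernel $\R\xi$.
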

\begin{proof}
The first item is an easy consequence of $N_{\Phi}=-{\rm d}\alpha\otimes \xi$ and \eqref{alphaphi}. Indeed, for any $X\in\g$,
$$
\begin{aligned}
0=N_{\Phi} (\xi,X)=&[\Phi \xi, \Phi X]-\Phi[\Phi \xi,X]-\Phi [\xi,\Phi X]+ \Phi^2[\xi,X]\\
          =&-\Phi [\xi, \Phi X]-[\xi,X]\,,
\end{aligned}
$$
i.e.
$$
[\xi, \Phi X]= \Phi [\xi,X]\,.
$$

The second item follows from the following computation, for $X,Y\in \g$,
$$
\begin{aligned}
2 g([\xi, \Phi X],Y)&=2 g( \Phi [\xi,X],Y)=-2 g([\xi,X], \Phi Y)=-{\rm d}\alpha([\xi,X],Y)\\
             &=\alpha([[\xi,X],Y])=-\alpha([[X,Y],\xi]-[[\xi,Y],X])\\
             &=\alpha([[\xi,Y],X])=-{\rm d}\alpha([\xi,Y],X)=-2 g([\xi,Y], \Phi X)\\
             &=2 g(X,[\xi, \Phi Y])\,.
             \end{aligned}
$$
Finally, let $X, Y\in \mathfrak g$. We can write $X=a\xi+ \Phi X'$, with $X' \in \ker \alpha$, and thus the third item follows from
$$
\begin{aligned}
g([\xi, X], Y) &= g ([\xi, a \xi + \Phi X'], Y) =  g( [\xi, \Phi X'], Y) = g (X', [\xi, \Phi Y])\\
 &= - g(\Phi  X', [\xi, Y]) = - g(X - a \xi, [\xi, Y]) = - g(X, [\xi, Y]).
\end{aligned}
$$
This shows  also that $\ker\ad_\xi\subseteq({\rm Im\,\ad_\xi})^{\perp}$. For dimensional reasons we have that $\ker\ad_\xi=({\rm Im\,\ad_\xi})^{\perp}$.
\end{proof}
As a direct consequence of Proposition \ref{kerad} we have the following
\begin{cor} \label{corad}
Let $(\g,\Phi,\alpha,\xi,g)$ be a Sasakian Lie algebra. Then there is an orthogonal decomposition
$$
\g=\ker \ad_\xi {\oplus}\, {\rm Im}\, \ad_\xi.
$$
\end{cor}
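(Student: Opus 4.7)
The plan is essentially to invoke item 3 of Proposition \ref{kerad} directly. That proposition established that $\ad_\xi$ is skew-symmetric with respect to the inner product $g$, and as a consequence identified the orthogonal complement: $({\rm Im}\,\ad_\xi)^{\perp}=\ker \ad_\xi$.

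From this, the decomposition is purely formal. Since $g$ is a (positive-definite) inner product on the finite-dimensional vector space $\g$, every subspace $V\subseteq\g$ satisfies $\g = V \oplus V^\perp$ as an orthogonal direct sum. Applying this to $V={\rm Im}\,\ad_\xi$ and substituting $V^\perp = \ker\ad_\xi$ gives precisely
$$
\g=\ker\ad_\xi \oplus {\rm Im}\,\ad_\xi,
$$
with orthogonality built in.

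The only thing worth double-checking is the equality $\ker\ad_\xi=({\rm Im}\,\ad_\xi)^{\perp}$ rather than mere inclusion; but this is already handled in the proof of Proposition \ref{kerad}, where the inclusion $\ker\ad_\xi \subseteq ({\rm Im}\,\ad_\xi)^{\perp}$ coming from skew-symmetry is upgraded to equality by the rank-nullity dimension count $\dim\ker\ad_\xi + \dim {\rm Im}\,\ad_\xi = \dim\g$. So there is no real obstacle here: the corollary is a one-line consequence of the preceding proposition, and the statement and proof amount to rephrasing item 3 in the language of direct-sum decompositions.
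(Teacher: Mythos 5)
Your proposal is correct and matches the paper's reasoning exactly: the paper states the corollary as a direct consequence of item 3 of Proposition \ref{kerad}, which already establishes $({\rm Im}\,\ad_\xi)^{\perp}=\ker\ad_\xi$, and the orthogonal decomposition then follows from the standard fact that $\g=V\oplus V^{\perp}$ for any subspace $V$ of a finite-dimensional inner product space. Nothing is missing.
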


\begin{prop} \label{sasakisubalgebra}
Let $(\g,\Phi,\alpha,\xi,g)$ be a Sasakian Lie algebra with  trivial center. 
\begin{enumerate}
\item  If  $\dim \g\geq 5$, then  $\ker\ad_\xi$ is a Sasakian Lie subalgebra of $\g$ with non-trivial center.

\item If $X \in \ker \ad_{\xi}, Y \in {\rm Im \,\ad_\xi}$, then $[X, Y] \in {\rm Im \,\ad_\xi}$.
\end{enumerate}
\end{prop}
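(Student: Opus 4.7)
The plan is to treat the two items independently. For item (1), I first verify that $\ker\ad_\xi$ is a Lie subalgebra by invoking the Jacobi identity: for $X,Y\in\ker\ad_\xi$,
$$
[\xi,[X,Y]]=[[\xi,X],Y]+[X,[\xi,Y]]=0,
$$
so $[X,Y]\in\ker\ad_\xi$. Since $\xi\in\ker\ad_\xi$ trivially and $\ker\ad_\xi$ is $\Phi$-invariant by Proposition~\ref{kerad}(1), the quadruple $(\Phi,\alpha,\xi,g)$ can be restricted to $\ker\ad_\xi$ in a meaningful way.

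Next, I check that the restricted quadruple is Sasakian. The almost contact metric identities $\alpha(\xi)=1$, $\Phi^2=-\mathrm{I}+\xi\otimes\alpha$ and $g(\Phi X,\Phi Y)=g(X,Y)-\alpha(X)\alpha(Y)$ are pointwise and pass directly. The relations $2g(X,\Phi Y)=d\alpha(X,Y)=-\alpha([X,Y])$ and $N_{\Phi}=-d\alpha\otimes\xi$ both involve only brackets and $\Phi$ applied to elements already in $\ker\ad_\xi$ (a $\Phi$-invariant subalgebra), so they too restrict. Since $\ad_\xi$ is $g$-skew-symmetric by Proposition~\ref{kerad}(3), its rank is even and $\dim\ker\ad_\xi$ is odd; combined with $\dim\g\geq 5$, this produces an odd-dimensional subalgebra of dimension at least $3$. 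The non-degeneracy of $d\alpha$ on $\ker\alpha\cap\ker\ad_\xi$ then follows from $d\alpha(X,\Phi X)=2g(X,X)\neq 0$ for $0\neq X\in\ker\alpha\cap\ker\ad_\xi$, so $\alpha$ is a genuine contact form on $\ker\ad_\xi$. Finally, $\xi$ lies in $\mathfrak{z}(\ker\ad_\xi)$ since $[\xi,Y]=0$ for every $Y\in\ker\ad_\xi$, giving the non-trivial center.

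For item (2), given $X\in\ker\ad_\xi$ and $Y=[\xi,Z]\in\mathrm{Im}\,\ad_\xi$, a single application of the Jacobi identity does the job:
$$
[X,Y]=[X,[\xi,Z]]=[[X,\xi],Z]+[\xi,[X,Z]]=[\xi,[X,Z]]\in\mathrm{Im}\,\ad_\xi,
$$
using $[X,\xi]=-[\xi,X]=0$. The main obstacle in the whole proposition is the delicate dimension bookkeeping in item (1): one must ensure, via the skew-symmetry of $\ad_\xi$ and the hypothesis $\dim\g\geq 5$, that $\ker\ad_\xi$ is large enough to carry a true contact structure rather than collapsing to $\R\xi$. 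Once that point is settled, the verification of normality and of the contact condition reduces to the purely formal fact that the subalgebra is $\Phi$-invariant and closed under the bracket.
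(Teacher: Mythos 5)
Your proof is correct and follows essentially the same route as the paper's: closure of $\ker\ad_\xi$ under the bracket, $\Phi$-invariance from Proposition \ref{kerad}, restriction of the algebraic identities, non-degeneracy of ${\rm d}\alpha$ on $\ker\alpha\cap\ker\ad_\xi$, and the Jacobi identity for item (2). The only real divergence is in the non-degeneracy step, where the paper shows $\ker\alpha\cap\ker\ad_\xi$ is ${\rm d}\alpha$-orthogonal to ${\rm Im}\,\ad_\xi$ and invokes non-degeneracy on $\ker\alpha$, while you pair $X$ with $\Phi X$ directly; your version is shorter and works because $\Phi X$ stays in $\ker\alpha\cap\ker\ad_\xi$ (note the sign: ${\rm d}\alpha(X,\Phi X)=2g(X,\Phi^2X)=-2g(X,X)$ for $X\in\ker\alpha$, not $+2g(X,X)$, though this is harmless). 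One caveat: the step you single out as ``the main obstacle'' --- that $\dim\g\geq 5$ forces $\dim\ker\ad_\xi\geq 3$ --- does not follow from what you wrote (oddness of $\dim\ker\ad_\xi$ plus $\dim\g\geq 5$ is compatible with $\ker\ad_\xi=\R\,\xi$), and neither the paper nor your argument actually needs it: all the restricted identities, the (possibly vacuous) non-degeneracy, and the centrality of $\xi$ hold regardless of the size of $\ker\ad_\xi$. So that sentence should either be deleted or downgraded to a remark rather than presented as a proved dimension bound.
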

\begin{proof}
$(1)$ Since $\ad_\xi$ is a derivation, its kernel is a Lie subalgebra of $\g$. Furthermore, let $X\in\ker\alpha\cap\ker\ad_\xi $ and $Y\in\g$, then
$$
{\rm d}\alpha(X,\ad_\xi Y)=-\alpha([X,[\xi,Y]])=\alpha([Y,[X,\xi]])+\alpha([\xi,[Y,X]])=0\,.
$$
Hence if ${\rm d}\alpha(X,Z)=0$ for any $Z\in \ker\ad_\xi$, then ${\rm d}\alpha(X,W)=0$ for any $W\in\ker \alpha$ and consequently $X=0$.
It follows that the restriction of $\alpha$ to $\ker\ad_\xi$ is a contact form and $\xi$ is still the Reeb vector. Clearly $\xi$ belongs to the
center of this subalgebra and the restrictions of $\Phi$ and $g$ induce a Sasakian structure on
$\ker\ad_\xi$.

\noindent $(2)$ We can write
$$
X = a \xi + X', \quad Y = [\xi, Y'],
$$
with $a \in \R$, $X' \in \ker \ad_{\xi} \cap \ker \alpha$, $Y' \in \g$. Then, by using Jacobi identity, 
$$
[X, Y] = a [\xi, [\xi, Y']] + [X', [\xi, Y']] =  a [\xi, [\xi, Y']] - [\xi, [Y', X']],
$$
which belongs to  ${\rm Im \,\ad_\xi}$.
\end{proof}

\begin{rem}
\emph{Note that if
$\ker \alpha={\rm Im}\,\ad_\xi$, then the commutator ideal $\g'=[\g,\g]$ coincides with $\g$.}

\end{rem}
Assume now that the Sasakian Lie algebra $(\g,\Phi,\alpha,\xi,g)$ with trivial center satisfies in addition the condition  $\g'\neq \g$.

With respect to the decomposition $\g=\ker \ad_\xi\oplus {\rm Im}\, \ad_\xi$, we can write
$$
(\ad_{\xi})_{|\ker\alpha}=
\begin{pmatrix}
0& 0\\
0& U
\end{pmatrix}\,,\quad
\Phi_{|\ker\alpha}=\begin{pmatrix}
A& C\\
B& D
\end{pmatrix},
$$
where $U\colon {\rm Im}\,\ad_\xi\to {\rm Im}\,\ad_\xi$ is a non-singular operator. Therefore from the equality $\Phi \, \ad_\xi=\ad_\xi \Phi$ one gets that
$$
B=C=0\,,\quad DU=UD\,,
$$
and since $(\Phi_{|\ker\alpha})^2=-{\rm I}$,
$$
A^2=D^2=-{\rm I}\,.
$$
In particular, if $\g$ is solvable, then the Reeb vector  $\xi$ cannot belong to the commutator $\g'$.

\medskip

\subsection{$3$-dimensional Sasakian Lie algebras}

Simply connected homogeneous $3$-dimensional contact metric manifolds were classified by Perrone in \cite{Perrone}, showing that the homogeneous space has to be a Lie group with a left-invariant contact metric structure. Among these Lie groups we  can find the ones that admit a Sasakian structure.

For the sake of completeness we perform  the classification of  $3$-dimensional Sasakian Lie groups even if it  is already known also by  \cite{Geiges,CC}.  Indeed, Geiges in \cite{Geiges} has determined  the diffeomorphism types of compact  manifolds of dimension $3$ that admit a Sasakian  structure.

\begin{theorem} Any $3$-dimensional Sasakian Lie algebra is isomorphic to one of the following: $\mathfrak{su}(2)$, $\mathfrak{sl}(2,\R)$, ${\mathfrak {aff}} (\R) \times \R$,
$\mathfrak{h}_3$.
\end{theorem}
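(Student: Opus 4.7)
My plan is to split the argument into two cases according to whether the center $\z(\g)$ is non-trivial or trivial, following the dichotomy already developed in the preceding subsections.

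For the non-trivial center case, I would invoke Corollary \ref{quotient}: $\g/\z(\g)$ is a K\"ahler Lie algebra of dimension $2$. In dimension two every non-degenerate $2$-form is automatically closed and every almost complex structure compatible with a metric is automatically integrable, so up to isomorphism only two $2$-dimensional K\"ahler Lie algebras arise: the abelian $\R^2$ and the non-abelian $\mathfrak{aff}(\R)$. The inverse construction recalled just before Corollary \ref{quotient}, together with Proposition \ref{equivalence}, would then yield precisely two Sasakian Lie algebras: extending $\R^2$ gives the Heisenberg algebra $\mathfrak{h}_3$, while extending $\mathfrak{aff}(\R)$ yields (after a routine change of basis) $\mathfrak{aff}(\R)\times\R$.

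For the trivial center case, I would fix an orthonormal basis $(e_1,e_2,\xi)$ with $\Phi e_1 = e_2$ and $\alpha$ dual to $\xi$. Proposition \ref{kerad} forces the restriction of $\ad_\xi$ to $\ker\alpha$ to be skew-symmetric with respect to $g$ and to commute with $\Phi$; combining these gives $[\xi,e_1]=\lambda e_2$ and $[\xi,e_2]=-\lambda e_1$ for some $\lambda\in\R$, and triviality of the center rules out $\lambda=0$. Writing $[e_1,e_2]=a_1 e_1+a_2 e_2+a_3\xi$, the contact condition $d\alpha=2\omega$ gives $a_3\neq 0$, and the Jacobi identity applied to the triple $(\xi,e_1,e_2)$ forces $\lambda a_1=\lambda a_2=0$, hence $a_1=a_2=0$. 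The resulting structure equations $[\xi,e_1]=\lambda e_2$, $[\xi,e_2]=-\lambda e_1$, $[e_1,e_2]=a_3\xi$ with $\lambda a_3\neq 0$ then describe $\mathfrak{su}(2)$ when $\lambda a_3>0$ and $\mathfrak{sl}(2,\R)$ when $\lambda a_3<0$.

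The main obstacle I expect lies in the trivial-center case, where one must effectively show that no $3$-dimensional solvable Lie algebra with trivial center supports a Sasakian structure; Proposition \ref{sasakisubalgebra} is unavailable here since its hypothesis $\dim\g\geq 5$ is not met. The Boothby--Wang remark quoted before Subsection $3.1$ identifies all semisimple contact Lie algebras, but the implication trivial center $\Rightarrow$ semisimplicity is not automatic in dimension three; the short computation using all three conclusions of Proposition \ref{kerad} together with Jacobi is the direct way to bridge this gap. The non-trivial center case, by contrast, is essentially bookkeeping once the $2$-dimensional K\"ahler classification is in hand.
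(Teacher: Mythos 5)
Your argument is correct, but it organizes the case analysis differently from the paper, and the difference is substantive in one of the two cases. The paper splits on whether $\g'=\g$ or $\g'\neq\g$: in the first case it uses that a $3$-dimensional Lie algebra equal to its commutator is semisimple, hence $\mathfrak{su}(2)$ or $\mathfrak{sl}(2,\R)$, citing the literature for the Sasakian structures there; in the second case $\g$ is solvable, and the $\Phi$-invariance of $\ker\ad_\xi\cap\ker\alpha$ together with the orthogonal decomposition of Corollary \ref{corad} forces this intersection to be $2$-dimensional (the $0$-dimensional alternative would give ${\rm Im}\,\ad_\xi=\ker\alpha$ and hence $\g'=\g$), so $\xi$ is central and the $2$-dimensional K\"ahler quotient yields $\mathfrak{h}_3$ or $\mathfrak{aff}(\R)\times\R$ --- exactly your non-trivial-center case. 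You split on the center instead, and your worry that ``trivial center $\Rightarrow$ semisimple'' is not automatic in dimension three is precisely the right one; your direct computation closes it: skew-symmetry of $\ad_\xi$ plus $\ad_\xi\Phi=\Phi\ad_\xi$ give $[\xi,e_1]=\lambda e_2$, $[\xi,e_2]=-\lambda e_1$ with $\lambda\neq0$ by triviality of the center, the contact condition gives $\alpha([e_1,e_2])=2\neq0$, and Jacobi applied to $(\xi,e_1,e_2)$ kills the $\ker\alpha$-component of $[e_1,e_2]$, leaving brackets that are visibly $\mathfrak{su}(2)$ or $\mathfrak{sl}(2,\R)$ according to the sign of $\lambda$. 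What your route buys is a self-contained treatment that simultaneously identifies the semisimple algebras and rules out solvable algebras with trivial center, and it even exhibits the Sasakian data explicitly on both; what the paper's route buys is getting the $\g'=\g$ case for free from structure theory, at the price of a separate argument that solvability forces $\xi$ to be central. The two treatments of the central-$\xi$ case (quotient to a $2$-dimensional K\"ahler Lie algebra, which is $\R^2$ or $\mathfrak{aff}(\R)$, then reconstruct $\mathfrak{h}_3$ or $\mathfrak{aff}(\R)\times\R$) coincide.
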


\begin{proof}
It is well known that, if the commutator $\g'$ of  a $3$-dimensional Lie algebra $\g$ coincides with $\g$, then  $\g$ is semisimple and it is isomorphic to either $\mathfrak{su}(2)$ or $\mathfrak{sl}(2,\R)$. Both Lie algebras admit a Sasakian structure (see for instance \cite{Geiges,CC,Perrone}).

If $\g' \neq \g$ then $\g$ is solvable and, by Corollary \ref{corad}, we have the orthogonal decomposition
$$
\g=\ker \ad_\xi{\oplus}\, {\rm Im}\, \ad_\xi,
$$
where $\xi$ is the Reeb vector. Since $ \ker \ad_\xi  \cap \ker \alpha$ is $\Phi$-invariant, the only  possibility is that $\dim (\ker \ad_\xi  \cap \ker \alpha) =2$ and thus $\xi$ belongs to the center $\z (\g)$.
The K\"ahler quotient $\g/\z( \g)$ is then isomorphic to $\R^2$ or ${\mathfrak {aff}} (\R)$. It is easy to show that in the former case $\g$ is isomorphic to ${\mathfrak h}_3$.  In the latter case there exists a basis  $\{ e^1, e^2, e^3 \}$
of $\g^*$ such that
$$
{\rm d} e^1 =0, \quad {\rm d} e^2 = e^{12},  \quad {\rm d} e^3 = 2 e^{12},
$$
with  respect to which the Sasakian structure is
$$
\xi = e_3,  \quad  \alpha = e^3,  \quad \Phi (e_1) = e_2,  \quad \omega =  e^{12}.
$$
Considering the new basis
$$
E^j = e^j, \, j = 1,2, \quad E^3 = e^3 - 2 e^2,
$$
we have the isomorphism of $\g$  with ${\mathfrak {aff}} (\R) \times \R$.
\end{proof}

\

\section{$5$-dimensional Sasakian Lie algebras}\label{5}

In order to classify  $5$-dimensional Sasakian Lie algebras $\mathfrak g$ up to isomorphism we will consider separately the case of Lie algebras with trivial or non-trivial center.

\subsection{$5$-dimensional Sasakian Lie algebras with non-trivial center}
Proposition \ref{equivalence} together with  the classification in  \cite{Ovando} of $4$-dimensional
K\"ahler Lie algebras allow us to classify $5$-dimensional Sasakian Lie algebras with
non-trivial center.
\begin{theorem} \label{classwithcenter}
Any $5$-dimensional Sasakian Lie algebra  $\g$ with non-trivial center is isomorphic to one of the following Lie algebras
$$
\begin{aligned}
&\g_1=\left(0,0,0,0,e^{12}+e^{34}\right)\simeq \mathfrak{h}_5\,;\\
&\g_2=\left(0,-e^{12},0,0,e^{12}+e^{34}\right)\simeq \mathfrak{aff}(\R)\times \mathfrak{h}_3\,;\\
&\g_3=\left(0,-e^{13},e^{12},0,e^{14}+e^{23}\right)\simeq \R\ltimes (\mathfrak{h}_3\times \R)\,;\\
&\g_4=\left(0,-e^{12},0,-e^{34},e^{12}+e^{34}\right)\simeq \mathfrak{aff}(\R)\times\mathfrak{aff}(\R)\times\R\,;\\
&\g_5=\left(\frac12\,e^{14},\frac12\,e^{24},-e^{12}+e^{34},0,e^{12}-e^{34}\right)\simeq \R\times(\R\ltimes \mathfrak{h}_3)\,;\\
&\g_6=\left(2e^{14},-e^{24},-e^{12}+e^{34},0,e^{23}\right)\simeq\R\ltimes \mathfrak{n}_4\,\,;\\
&\g_7^{\delta}=\left(\frac{\delta}{2}e^{14}+e^{24},-e^{14}+\frac{\delta}{2}e^{24},-e^{12}+\delta e^{34},0,e^{12}-\delta e^{34} \right)\simeq \R\times(\R\ltimes \mathfrak{h}_3)\,,\,\,
\delta>0\,;\\
&\g_8^{\delta}=\left(e^{14},\delta\,e^{34},-\delta\,e^{24},0,e^{14}+e^{23}\right)\simeq \R\ltimes_\delta (\mathfrak{h}_3\times \R)\,,\,\,\delta>0\,.\\
\end{aligned}
$$
\end{theorem}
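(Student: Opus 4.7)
The strategy is to reduce the classification to the known classification of $4$-dimensional Kähler Lie algebras. By Corollary~\ref{quotient}, from a $5$-dimensional Sasakian Lie algebra with non-trivial center $\z(\g)=\R\xi$ we obtain the $4$-dimensional Kähler Lie algebra $\ker\alpha$ (with the bracket $\theta$, restricted metric and restricted $\Phi$). Conversely, the construction recalled immediately after Corollary~\ref{quotient} shows that every $4$-dimensional Kähler Lie algebra $(\mathfrak h,[\cdot,\cdot]_{\mathfrak h},\omega,\Phi,g)$ arises this way from $\g=\mathfrak h\oplus\R\xi$ with
\[
[X,Y]_\g=[X,Y]_{\mathfrak h}-\omega(X,Y)\,\xi,\qquad [\xi,\mathfrak h]=0.
\]
By Proposition~\ref{equivalence} this bijection passes to isomorphism classes, so it suffices to list the $4$-dimensional Kähler Lie algebras (up to equivalence) and compute the central extension for each.

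I would then invoke Ovando's classification in \cite{Ovando}: it gives a finite list of eight $4$-dimensional Kähler Lie algebras (one of which is a continuous family parametrized by $\delta>0$). For each representative $(\mathfrak h,\omega,\Phi,g)$ I would fix an adapted orthonormal basis $\{e_1,e_2,e_3,e_4\}$ of $\mathfrak h$, extend to a basis $\{e_1,\ldots,e_5\}$ of $\g$ with $e_5=\xi$, and compute the structure equations in dual form: the differentials $\di e^i$ for $i\le 4$ are inherited unchanged from the Kähler Lie algebra $\mathfrak h$, while the rule $[X,Y]_\g=[X,Y]_{\mathfrak h}-\omega(X,Y)\xi$ gives
\[
\di e^5=\omega,
\]
where $\omega$ is expressed in the chosen basis. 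Running through Ovando's eight cases produces exactly the eight lines $\g_1,\ldots,\g_8^{\delta}$ in the statement.

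The final task is to identify each $5$-dimensional Lie algebra thus obtained with the indicated Lie algebra (Heisenberg, products of $\mathfrak{aff}(\R)$ with lower-dimensional pieces, semidirect products $\R\ltimes\mathfrak{h}_3$, $\R\ltimes\mathfrak{n}_4$, $\R\ltimes(\mathfrak h_3\times\R)$, etc.). This is done directly from the structure equations: each case either splits as a product along an obvious ideal, or one recognizes $\ad_{e_i}$ acting on the remaining generators as the derivation defining a standard semidirect product. This identification step, together with checking that no two Sasakian structures coming from inequivalent Kähler data in Ovando's list become isomorphic as Sasakian Lie algebras, is the main bookkeeping obstacle; but thanks to Proposition~\ref{equivalence} the latter is automatic, and the former is a routine change-of-basis calculation case by case. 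No other ingredient is needed, so this yields the complete list $\g_1,\ldots,\g_8^{\delta}$.
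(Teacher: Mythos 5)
Your proposal is correct and follows essentially the same route as the paper: reduce to Ovando's classification via the central-extension correspondence of Corollary~\ref{quotient} and Proposition~\ref{equivalence}, write ${\rm d}e^5$ in terms of the K\"ahler form, and identify the resulting Lie algebras case by case. The only understatement is in the deferred bookkeeping: each entry of Ovando's list carries a family of admissible K\"ahler forms (parameters $\lambda,\mu<0$ and sign choices), so the central extensions are a priori multi-parameter families whose reduction to the normal forms $\g_1,\dots,\g_8^{\delta}$ requires explicit changes of basis (as the paper does, e.g., for $\g_6^{\tau}$ via $E_1=2e_1+\tau e_5$), and two of the resulting families (namely $\g_7^{\delta}$ and $\g_8^{\delta}$), not one, retain a genuine parameter $\delta>0$.
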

\begin{proof}
By Corollary \ref{quotient},  if  $(\g,\Phi,\alpha, \xi,g)$ is  a $5$-dimensional Sasakian Lie algebra, then the quotient $\mathfrak g/ \z(\mathfrak g)$ is a $4$-dimensional K\"ahler Lie algebra with the K\"ahler structure induced by the Sasakian one. We may choose a basis $\{ e_1, \ldots, e_5\}$ of $\mathfrak g$ such that
$$
\xi = e_5,  \quad \alpha = e^5,  \quad \mathfrak g/ \z(\mathfrak g) = {\rm Span}  \{ e_1, e_2, e_3, e_4 \}
$$
and ${\rm d} e^5 =  2 \Omega$, where $\Omega$ is the K\"ahler form on the quotient.

$4$-dimensional K\"ahler Lie algebras  and their  possible K\"ahler forms have been classified in \cite{Ovando}.
By using this classification we obtain  that $\mathfrak g$ is isomorphic to one of the following  Lie algebras
$$
\begin{aligned}
&\ka_1=\left(0,0,0,0,\lambda\,e^{12}+\mu\,e^{34}\right),\quad \lambda\,,\mu<0\,;\\
&\ka_2=\left(0,-e^{12},0,0,\lambda\,e^{12}+\mu\,e^{34}\right),\quad \lambda\,,\mu<0\,;\\
&\ka_3=\left(0,-e^{13},e^{12},0,\lambda\,e^{14}+\mu\,e^{23}\right)\,,\quad \lambda\,,\mu<0\,;\\
&\ka_4=\left(0,-e^{12},0,-e^{34},\lambda\,e^{12}+\mu\,e^{34}\right)\,,\quad \lambda\,,\mu<0\,;\\
&\ka_5=\left(\frac12\,e^{14},\frac12\,e^{24},-e^{12}+e^{34},0,\lambda(e^{12}-e^{34})\right)\,,\quad \lambda<0\,;\\
&\ka_6=\left(2e^{14},-e^{24},-e^{12}+e^{34},0,\lambda\,e^{14}+\mu e^{23}\right)\,,\quad \lambda\,,\mu<0\,;\\
&\ka_7^-=\left(\frac{\delta}{2}e^{14}+e^{24},-e^{14}+\frac{\delta}{2}e^{24},-e^{12}+\delta e^{34},0,\lambda( e^{12}-\delta e^{34}) \right)\,,\quad
\delta>0\,,\lambda<0\,;\\
&\ka_7^+=\left(\frac{\delta}{2}e^{14}+e^{24},-e^{14}+\frac{\delta}{2}e^{24},-e^{12}+\delta e^{34},0,\lambda( e^{12}-\delta e^{34}) \right)\,,\quad
\delta>0\,;\\
&\ka_8^-=\left(e^{14},\delta\,e^{34},-\delta\,e^{24},0,\lambda\,e^{14}+\mu\,e^{23}\right)\,,\quad\delta\,,\lambda>0\,,\mu<0;\\
&\ka_8^+=\left(e^{14},\delta\,e^{34},-\delta\,e^{24},0,\lambda\,e^{14}+\mu\,e^{23}\right)\,,\quad\delta\,,\lambda\,,\mu>0\,;\\
\end{aligned}
$$
equipped with the Sasakian structure $(\Phi,\alpha=e^5,\xi=e_5,g)$, where  $\Phi$ is  given  respectively by
$$
\begin{array} {lll}
& \Phi_1(e_1)=e_2\,,\quad &\Phi_1(e_3)=e_4\\
& \Phi_2(e_1)=e_2\,,\quad &\Phi_2(e_3)=e_4\\
& \Phi_3(e_1)=e_4\,,\quad & \Phi_3(e_2)=e_3\\
&  \Phi_4(e_1)=e_2\,,\quad &  \Phi_4(e_3)=e_4\\
&  \Phi_{5}(e_1)=e_2\,,\quad &  \Phi_{5}(e_4)=e_3\\
&  \Phi_6(e_2)=e_3\,,\quad &  \Phi_6(e_4)=-2\,e_1\\
&  \Phi_{7}^-(e_1)=e_2\,,\quad &  \Phi_{7}^-(e_4)=e_3\\
&  \Phi_{7}^+(e_1)=-e_2\,,\quad &  \Phi_{7}^+(e_4)=-e_3\\
&  \Phi_8^-(e_4)=e_1\,,\quad &  \Phi_8^-(e_2)=e_3\\
&  \Phi_8^+(e_4)=e_1\,,\quad &  \Phi_8^+(e_3)=e_2\\
\end{array}
$$
and $2 g  ( \cdot  \, ,  \cdot \; ) =  {\rm d} e^5 (\Phi \, \cdot  , \,  \cdot )$.

By a direct computation one may check the following isomorphisms
$$
\ka_i \cong \g_i,  \, i = 1, \ldots, 5, \quad \ka_6^{\pm} \cong \g_6^{\tau} \,  ({\rm with} \, \tau ={ \lambda} /{\mu}), \quad \ka_j^{\pm} \cong \g_j^{\delta}, \, j = 7, 8.
$$

For the family $$\g_6^{\tau}=\left(2e^{14},-e^{24},-e^{12}+e^{34},0,\tau \,e^{14}+e^{23}\right),\,\tau>0$$ we can show that every  element in the family   is isomorphic to $\g_6$  by considering the new basis
$$
E_1 =  2 e_1 + \tau e_5, \quad E_j = e_j, j = 2, \ldots, 5.
$$

By Proposition \ref{equivalence}, the Lie algebras $\mathfrak g_i$ and $\mathfrak g_j^{\delta}$ are  not isomorphic for any $i$ and $j$. Moreover,   $\mathfrak g_i$ (respectively  $\mathfrak g_i^{\delta})$   is not isomorphic to   $\mathfrak g_k$ (respectively  $\mathfrak g_k^{\delta})$ for any $i \neq k$.

Applying again Proposition \ref{equivalence}, it follows that the Lie algebras  in the family $\g_7^{\delta}$  are not isomorphic one each other for different values of $\delta$. The same holds for the family $\g_8^{\delta}$.

\end{proof}

As a consequence  we have the following
\begin{cor} \label{compactquotient}
A unimodular Sasakian Lie algebra with non-trivial center is isomorphic either to the nilpotent
Heisenberg Lie algebra $\mathfrak h_5$ or the solvable Lie algebra $\g_3$. The simply connected Lie group $G_3$ with Lie algebra $\g_3$ admits a co-compact discrete subgroup $\Gamma$.
\end{cor}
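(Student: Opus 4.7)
My plan is to dispatch the two assertions separately.

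For the unimodularity claim, I will go through the eight algebras $\g_1,\g_2,\g_3,\g_4,\g_5,\g_6,\g_7^{\delta},\g_8^{\delta}$ of Theorem~\ref{classwithcenter} and use the criterion that $\g$ is unimodular if and only if, for every $k$, no term of the form $c\,e^{i}\wedge e^{k}$ with $c\neq0$ appears in $\di e^k$ (such a term is responsible for a nonzero diagonal entry of $\ad_{e_i}$ on $e_k$, hence contributes to $\operatorname{tr}\ad_{e_i}$). Under this criterion $\g_1\cong\mathfrak h_5$ is unimodular by nilpotency, and for $\g_3$ the nontrivial structure equations $\di e^2=-e^{13}$, $\di e^3=e^{12}$ and $\di e^5=e^{14}+e^{23}$ contain no $e^2$, $e^3$ or $e^5$ factor on the right, so $\g_3$ is unimodular as well. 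In each of the remaining six cases the criterion fails: $\di e^2=-e^{12}$ for $\g_2$ and $\g_4$; $\di e^1=\tfrac12 e^{14}$ for $\g_5$ and $\g_7^{\delta}$; $\di e^1=2\,e^{14}$ for $\g_6$; $\di e^1=e^{14}$ for $\g_8^{\delta}$. Hence $\g_1\cong\mathfrak h_5$ and $\g_3$ are the only unimodular Sasakian Lie algebras with non-trivial center.

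For the lattice claim, the nilradical of $\g_3$ is $\mathfrak n=\mathrm{span}\{e_2,e_3,e_4,e_5\}$, with the only nonzero bracket $[e_2,e_3]=-e_5$; hence $\mathfrak n\cong\mathfrak h_3\oplus\R$ and $G_3=\R\ltimes_{\phi}N$, where $N$ denotes the simply connected group with Lie algebra $\mathfrak n$ and $\phi(t)=\exp(t\,\ad_{e_1})$. A direct computation of $\ad_{e_1}$ shows that $\phi(t)$ rotates $\{e_2,e_3\}$ by angle $t$, fixes $e_5$, and sends $e_4\mapsto e_4-t\,e_5$. I set $t_0=\pi/2$ and introduce the rescaled basis $v_1=e_2$, $v_2=e_3$, $v_3=e_5$, $v_4=(2/\pi)\,e_4$; in this basis the bracket $[v_1,v_2]=-v_3$ is integral, and $\phi(\pi/2)$ sends $(v_1,v_2,v_3,v_4)$ to $(-v_2,v_1,v_3,v_4-v_3)$, which is an element of $\operatorname{SL}(4,\Z)$. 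By Mal'cev's theorem the $\Z$-span of $\{v_1,v_2,v_3,v_4\}$ exponentiates via Baker--Campbell--Hausdorff to a cocompact lattice $\Gamma_N\subset N$, and this lattice is preserved by $\phi(\pi/2)$; therefore $\Gamma=(\pi/2)\Z\ltimes_{\phi}\Gamma_N$ is a cocompact discrete subgroup of $G_3$.

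The principal subtlety is the coordinated rescaling. The eigenvalues $\pm i$ of $\ad_{e_1}$ on $\{e_2,e_3\}$ force $t_0$ to be a nonzero multiple of $\pi/2$ in order for the rotational block of $\phi(t_0)$ to preserve a lattice in that plane; the nilpotent block $e_4\mapsto e_4-t_0\,e_5$ then dictates the rescaling of $e_4$ relative to $e_5$, which must remain compatible with the integrality of $[e_2,e_3]=-e_5$ in the chosen basis. The minimal choice $t_0=\pi/2$ together with $v_4=(2/\pi)\,e_4$ and $v_3=e_5$ satisfies all these constraints simultaneously.
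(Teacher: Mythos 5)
Your argument is correct, and its two halves relate to the paper's proof differently. For the unimodularity claim you do essentially what the paper does (a direct check on the list of Theorem \ref{classwithcenter}), but be aware that your stated criterion is only a sufficient condition for unimodularity, not an equivalence: a term $c\,e^{i}\wedge e^{k}$ in $\di e^k$ contributes $-c$ to the diagonal entry of $\ad_{e_i}$ on $e_k$, and several such terms could in principle cancel in $\operatorname{tr}\ad_{e_i}$. So to rule out, say, $\g_6$ you should sum \emph{all} the diagonal contributions of $\ad_{e_4}$ (they are $2-1+1=2\neq 0$) rather than point at a single nonzero one; the same remark applies to $\g_5$ and $\g_7^{\delta}$, where again the contributions have a nonzero sum. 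The conclusion is unaffected. For the lattice in $G_3$ your route is genuinely different from the paper's: the paper realizes $G_3$ explicitly as $\R^5$ with a closed-form multiplication and exhibits the lattice $\Gamma=\{(2\pi m_1,m_2,m_3,m_4,\tfrac{1}{2\pi}m_5)\}$ by inspection, whereas you use the structural splitting $G_3=\R\ltimes_{\phi}N$ with $N$ the simply connected group of the nilradical $\mathfrak h_3\times\R$, choose the time $t_0=\pi/2$ at which the monodromy $\phi(t_0)$ becomes integral in a rescaled basis, and invoke Mal'cev. Your mapping-torus argument is more conceptual and explains \emph{why} a lattice exists (the eigenvalues $\pm i$ of $\ad_{e_1}$ force $t_0\in\tfrac{\pi}{2}\Z$), at the cost of quoting Mal'cev's theorem; the paper's construction is self-contained but unilluminating. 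One small repair is needed in your Mal'cev step: because of the $\tfrac12[X,Y]$ term in Baker--Campbell--Hausdorff, the set $\exp(\Z v_1+\cdots+\Z v_4)$ is not itself a subgroup of $N$; you should instead take $\Gamma_N$ to be the subgroup \emph{generated} by $\exp(v_1),\dots,\exp(v_4)$, which is a cocompact lattice since the $v_i$ have integral structure constants, and observe that it is preserved by $\phi(\pi/2)$ because both $\phi(\pi/2)$ and $\phi(-\pi/2)$ send each generator to a word in the generators (here one uses that $v_3$ and $v_4$ are central in $\mathfrak n$, so $\exp(v_4-v_3)=\exp(v_4)\exp(v_3)^{-1}$). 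With these cosmetic fixes the proof is complete.
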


\begin{proof}
By a direct computation one may check that the only unimodular  Sasakian Lie algebras with non-trivial center are in  fact ${\mathfrak g}_1 \cong  {\mathfrak h}_5$ and ${\mathfrak g}_3 \cong \R \ltimes ( {\mathfrak h}_3 	\times \R)$. The  solvable simply connected Lie group $G_3$ is isomorphic to $\R^5$ with the following product
{\small $$
 \begin{pmatrix} x_1\\ x_2\\ x_3\\ x_4\\ x_5  \end{pmatrix}
 \cdot   \begin{pmatrix} y_1\\ y_2\\ y_3\\ y_4\\ y_5  \end{pmatrix} =
  \begin{pmatrix} x_1 + y_1\\  \cos (x_1) y_2 + \sin(x_1) y_3 + x_2\\ - \sin(x_1) y_2 + \cos(x_1) y_3 + x_3\\ Q \\ x_5 + y_5 \end{pmatrix},
$$}
where \[ Q=y_4 +   x_1 y_5 - \sin^2(x_1) y_2 y_3   - \frac 14 \sin (2x_1)(y_2^2 - y_3^2) +x_2\left(-\sin(x_1)y_2+\cos(x_1)y_3\right) +x_4.\]

The discrete subgroup
$$
\Gamma = \left \{ \left(2 \pi m_1, m_2, m_3, m_4, \frac  {1} {2 \pi} m_5 \right) \, \vert  \, m_i \in \Z  \right \}
$$
acts freely and properly discontinuously on $G_3$. Moreover, the quotient manifold $\Gamma \backslash G_3$ is compact.
\end{proof}

The solvmanifold $\Gamma \backslash G_3$ is by construction the total space of an $S^1$-bundle over a $4$-dimensional non completely solvable K\"ahler solvmanifold. This class of K\"ahler solvmanifolds was found by Hasegawa in \cite{Hase3} (see also \cite{Hase2}); all of these solvmanifolds are hyperelliptic surfaces. We recall that a solvmanifold is called completely solvable if the adjoint representation of the corresponding solvable Lie group has only real eigenvalues.

\medskip

\subsection{$5$-dimensional Sasakian Lie algebras with trivial center}\label{sectrivialcenter}

Let $(\g,\Phi,\alpha,\xi,g)$ be a $5$-dimensional Sasakian Lie algebra with trivial center and $\mathfrak g'  = \mathfrak g$. By \cite[Section 5] {Diatta} the only contact Lie algebra with the above property is the semidirect product $\mathfrak {sl} (2, \R) \ltimes \R^2$, where $\mathfrak {sl} (2, \R)$ acts on $\R^2$ by matrix multiplication. We can prove the following

\begin{prop}
The Lie algebra $\mathfrak {sl} (2, \R) \ltimes \R^2$ does not admit any Sasakian structure.
\end{prop}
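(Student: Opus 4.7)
Assume for contradiction that $(\Phi, \alpha, \xi, g)$ is a Sasakian structure on $\g = \mathfrak{sl}(2,\R) \ltimes \R^2$ and decompose the Reeb vector as $\xi = s + r$ with $s \in \mathfrak{sl}(2,\R)$ and $r \in \R^2$. Conjugation by $\exp(v)$, $v \in \R^2$, sends $(s,r)$ to $(s, r - s v)$, so whenever $s$ is invertible on $\R^2$ we may normalize to $r = 0$. The plan is to run a case analysis on the Jordan type of $s$ in $\mathfrak{sl}(2,\R)$, covering in turn $s=0$, $s$ non-zero nilpotent, $s$ semisimple with real eigenvalues, and $s$ elliptic; the first three cases will fall to soft structural arguments coming from the general results of Section 3, while the elliptic case requires a more refined eigenspace computation.

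If $s = 0$, then $\xi \in \R^2 \setminus \{0\}$; the abelianness of $\R^2$ gives $\R^2 \subseteq \ker \ad_\xi$, while transitivity of $\mathfrak{sl}(2,\R)$ on $\R^2 \setminus \{0\}$ gives $\mathrm{Im}\,\ad_\xi = \mathfrak{sl}(2,\R) \cdot r = \R^2$. Hence $\mathrm{Im}\,\ad_\xi \subseteq \ker \ad_\xi$, contradicting the orthogonal decomposition of Corollary \ref{corad}. If $s$ is a non-zero nilpotent, a direct computation yields $\dim \ker \ad_\xi = 2$, so $\ker \ad_\xi \cap \ker \alpha$ is one-dimensional; but this intersection is $\Phi$-invariant (by Proposition \ref{kerad}(1) together with $\alpha \circ \Phi = 0$) and $\Phi^2 = -\mathrm{I}$ on $\ker \alpha$ forces even dimension, a contradiction. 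If $s$ is semisimple with real non-zero eigenvalues $\pm a$, after normalization $\xi = aH$ and $\ad_\xi$ has spectrum $\{0, \pm a, \pm 2a\}$, containing real non-zero values; this contradicts the skew-symmetry of $\ad_\xi$ from Proposition \ref{kerad}(3), which forces the spectrum of $\ad_\xi$ to be purely imaginary.

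The main obstacle is the elliptic case. Normalize to $\xi = \theta J$ with $\theta > 0$ and $J = \begin{pmatrix} 0 & -1 \\ 1 & 0 \end{pmatrix}$; the eigenvalues of $\ad_\xi$ on $\g_\C$ are then $0$ and $\pm 2 i \theta$ on $\mathfrak{sl}(2,\R)_\C$ and $\pm i \theta$ on $\R^2_\C$, each eigenspace being one-dimensional over $\C$. By Proposition \ref{kerad}(1), $\Phi$ commutes with $\ad_\xi$ and therefore acts on every eigenspace as a scalar; combined with $\Phi^2 = -\mathrm{I}$ on $\ker \alpha$, the scalar on $E_{\pm i \theta}$ must equal $\pm i$, up to a common sign. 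Now pick $X \in E_{i \theta}$ and $Y = \overline X \in E_{-i \theta}$: both lie in the abelian ideal $\R^2_\C$, so $[X, Y] = 0$ and therefore $\di\alpha(X, Y) = -\alpha([X, Y]) = 0$. On the other hand, $\di\alpha(X, Y) = 2 g(X, \Phi Y) = \pm 2 i\, g(X, \overline X)$, and writing $X = u + i v$ with $u, v \in \R^2$ the $\C$-bilinear extension of $g$ gives $g(X, \overline X) = g(u,u) + g(v,v) > 0$ since $X \neq 0$ and $g$ is positive definite. The two evaluations of $\di\alpha(X, Y)$ are incompatible, completing the proof.
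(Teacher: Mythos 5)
Your proof is correct, but it takes a genuinely different route from the paper's. The paper works with the explicit structure equations, writes a general $1$-form $\alpha=\sum a_ie^i$, records the contact condition $\Delta\neq 0$, solves for the Reeb vector in closed form, and then observes that $\ker \ad_\xi\cap\ker\alpha$ is spanned by $-a_5e_4+a_4e_5$: this space is $1$-dimensional (hence not $\Phi$-invariant, violating Proposition \ref{kerad}) unless $a_4=a_5=0$, which kills $\Delta$. So a single uniform computation settles everything. You instead parametrize by the $\mathfrak{sl}(2,\R)$-component $s$ of $\xi$, normalize by automorphisms, and split into four Jordan-type cases, using the structural machinery of Section 3 in different ways: Corollary \ref{corad} for $s=0$, the odd-dimensionality of the $\Phi$-invariant space $\ker\ad_\xi\cap\ker\alpha$ for $s$ nilpotent (the same mechanism as the paper's generic case), the purely imaginary spectrum forced by skew-symmetry of $\ad_\xi$ for $s$ hyperbolic, and a compatibility computation on the $\pm i\theta$-eigenspaces for $s$ elliptic. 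All four cases check out; in the elliptic case you should add the one-line observation that $E_{\pm i\theta}\subseteq{\rm Im}\,\ad_\xi\subseteq(\ker\alpha)_\C$ (since $\alpha([\xi,\cdot])=-{\rm d}\alpha(\xi,\cdot)=0$), which is what licenses $\Phi^2=-{\rm I}$ and hence $\mu_\pm^2=-1$ on those eigenspaces. What your approach buys is conceptual transparency and independence from the explicit Reeb-vector formula; what it costs is length, and in fact the paper's computation shows implicitly that the $\mathfrak{sl}(2,\R)$-part of the Reeb vector of an actual contact form on this algebra is never elliptic, so your hardest case never arises for genuine contact forms --- though ruling it out directly, as you do, is perfectly legitimate.
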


\begin{proof}
Let $\{ e_1, e_2, e_3 \}$ be the standard basis of $\mathfrak {sl}(2, \R)$ with Lie brackets
$$
[e_1, e_2] = 2 e_2, \quad [e_1, e_3] = - 2 e_3, \quad [e_2, e_3] =e_1
$$
and let $\{ e_4, e_5 \}$ be the canonical  basis of $\R^2$. Then $\mathfrak {sl} (2, \R) \ltimes \R^2$ has structure equations
$$
\begin{array}{l}
{\rm d} e^1 = - e^{23},\\
{\rm d} e^2 =  - 2 e^{12},\\
{\rm d} e^3 = 2 e^{13},\\
{\rm d} e^4 = - e^{14} - e^{25},\\
{\rm d} e^5 = e^{15} - e^{34}.
\end{array}
$$
A $1$-form $\alpha=\sum_{i = 1}^5  a_i e^i$ is contact if and only if the real numbers $a_i$ satisfy the condition
\begin{equation} \label{Delta}
 \Delta:= a_3 a_4^2 -  a_2 a_5^2 - a_1 a_4 a_5 \neq 0.
\end{equation}
In this case, the corresponding Reeb vector is given by
$$
\xi = - \frac {1} {3 \Delta}  \left( a_4 a_5 e_1 + a_5^2 e_2 - a_4^2 e_3 + (a_1 a_5 - 2 a_3 a_4) e_4 + (a_1 a_4 + 2 a_2 a_5) e_5 \right).
$$
One can check that $X\in\mathfrak{sl}(2,\R)\ltimes \R^2$ belongs to $\ker \ad_{\xi} \cap \ker \alpha $ if and only  $$X = t (- a_5 e_4 + a_4 e_5), \quad t \in \R.
$$
If there exists a Sasakian structure with contact form $\alpha$, then $\ker \ad_{\xi} \cap \ker \alpha $ has to be $\Phi$-invariant (see Proposition \ref{kerad}) and this is only possible if $a_4 = a_5 =0$ in contrast with \eqref{Delta}.

\end{proof}

Now we can consider the case  of  $5$-dimensional Sasakian Lie algebras with trivial center and such that $\g'\neq \g$. In this case
$$
\dim \ker (\ad_\xi)_{|\ker\alpha}=\dim {\rm Im}( \ad_\xi)=2\,.
$$
It is easy to see that there exists an orthonormal basis $\{e_1,\dots,e_4\}$ of $\ker\alpha$ with respect to which $\Phi_{|\ker\alpha}$ can be written as
$$
\Phi_{|\ker \alpha} =
\begin{pmatrix}
 0& 1 & 0&0\\
 -1& 0& 0&0\\
 0& 0& 0& 1\\
 0& 0& -1 &0
\end{pmatrix}
$$
and $\ker\ad_{\xi}=\text{Span}\{\xi,e_1,e_2\}$, ${\rm Im}\,\ad_{\xi}=\text{Span}\{e_3,e_4\}$. Moreover in this basis
$$
(\ad_\xi)_{|\ker\alpha}=
\begin{pmatrix}
 0& 0 & 0&0\\
 0& 0& 0&0\\
 0& 0& a& -b\\
 0& 0& b &a
\end{pmatrix}\,.
$$
Note that in terms of $\{e_1,\dots,e_4\}$ the  $2$-form ${\rm d}\alpha$ takes the standard form ${\rm d}\alpha=2\,(e^{12}+e^{34})$. Furthermore, taking into account that $\alpha([e_3,e_4])=-{\rm d}\alpha(e_3,e_4)=-2$, and recalling that $\theta:\g\times\g\to\ker\alpha$ denotes the projection of the bracket on $\g$ onto $\ker\alpha$, we have
$$
\begin{aligned}
0=&[\xi,[e_3,e_4]]+[e_4,[\xi,e_3]]-[e_3,[\xi,e_4]]\\
 =&[\xi,\t(e_3,e_4)-2\,\xi]+[e_4,a\,e_3+b\,e_4]-[e_3,-b\,e_3+a\,e_4]\\
 =&[\xi,\t(e_3,e_4)]+a\,[e_4,e_3]-a\,[e_3,e_4]\\
 =&[\xi,\t(e_3,e_4)]-2a\,[e_3,e_4]\\
 =&[\xi,\t(e_3,e_4)]-2a\,\t(e_3,e_4)+4a\,\xi
\end{aligned}
$$
which implies
\begin{equation}\label{a=0}
a=0\,,\quad \t(e_3,e_4)\in\ker \ad_\xi\cap\ker\alpha\,.
\end{equation}
From now on we set $e_5=\xi$ and denote by
$\{e^{1},\dots,e^{5}\}$ the dual basis of $\{e_1,\dots,e_5\}$.
Since $b\neq 0$, by a suitable rescaling of $\alpha$ and $\xi$ we may assume $b=\pm1$.
We will examine separately the two cases: Case A: $b= 1$ and Case B:  $b = - 1$.

\smallskip
We start by considering the case  A.
Taking into account \eqref{a=0}  and Proposition \ref{sasakisubalgebra}, we can write
\begin{equation} \label{structureeqA}
\begin{aligned}
&{\rm d}e^1=a_1\,e^{12}+a_6\,e^{34}\,,\\
&{\rm d}e^2=b_1e^{12}+b_6\,e^{34}\,,\\
&{\rm d}e^3=-e^{45}+c_2\,e^{13}+c_3\,e^{14}+c_4\,e^{23}+c_5\,e^{24}\,,\\
&{\rm d}e^4=e^{35}+f_2\,e^{13}+f_3\,e^{14}+f_4\,e^{23}+f_5\,e^{24}\,,\\
&{\rm d}e^5=2(e^{12}+e^{34})\,.
\end{aligned}
\end{equation}
From the condition $N_{\Phi}=-{\rm d} e^5 \otimes e_5$
we get
$$
c_5=c_2- f_3 - f_4, \quad f_5 =f_2  + c_3+ c_4.
$$
Now we impose ${\rm d}^2=0$.  From the vanishing of the coefficients of $e^{ij5}$, $i, j = 1, \ldots, 4$, in ${\rm d}^2 e^k =0, k = 1, \ldots, 5$, we get the following linear equations
\begin{equation} \label{d2}
 c_2 - f_3 = f_2 + c_3 =0.
\end{equation}
Moreover,
$$
{\rm d}^2 e^5 =  (a_6   + 2 c_4 + f_2 + c_3) e^{234} + (- b_6 + c_2 + f_3) e^{134}.
$$
and therefore in addition to  \eqref{d2} we have
$$
 a_6 = -2 c_4, \quad  b_6 =2 c_2.
 $$
In this way the structure equations \eqref{structureeqA}
 of $\mathfrak g$ reduce to
$$
\begin{array} {l}
{\rm d} e^1 = a_1 e^{12} -2 c_4 e^{34},\\
{\rm d}  e^2 = b_1 e^{12}  + 2 c_2 e^{34},\\
{\rm d} e^3 = -e^{45} +c_2 e^{13} + c_3 e^{14}  + c_4 e^{23} - f_4 e^{24},\\
{\rm d} e^4 = e^{35} -c_3 e^{13} +c_2 e^{14} +  f_4 e^{23} + c_4 e^{24},\\
{\rm d} e^5 = 2 (e^{12} + e^{34})
\end{array}
$$
with the structure constants satisfying the conditions
$$
\begin{array}{l}
c_2 (a_1  +2  c_4) =0,  \quad c_4 (a_1 +2 c_4) =0, \quad  c_2 (- b_1 +2 c_2)=0,\\
 c_4 ( - b_1 + 2 c_2)=0, \quad
 a_1 c_3 -b_1 f_4 + 2 =0, \quad c_2 a_1+c_4 b_1=0.
\end{array}
$$
We get the following solutions for the above system
$$
\begin{aligned}
{\rm A}1)\,\,  & b_1 = c_2 = c_4 =0,  c_3 \neq 0, a_1  = - \frac {2} {c_3},\\
{\rm A}2)\,\,  & c_2 = c_4 =0,  b_1 \neq 0, f_4 = \frac{2 + a_1 c_3} {b_1} ,\\
{\rm A}3)\,\,  & b_1=c_2 =0, a_1 \neq 0, c_3  = - \frac{2} {a_1}, c_4 = - \frac{1}{2} a_1,\\
{\rm A}4)\,\,  &  b_1 \neq 0, c_2 = \frac 12 b_1, c_4 = - \frac 12 a_1, f_4 =  \frac{2 + a_1 c_3} {b_1}.
\end{aligned}
$$

In the first two cases, we have that
$$
{\rm A}1), \, {\rm A} 2)\simeq \mathfrak{aff}(\R)\times \mathfrak{sl}(2,\R)
$$
where respectively
$$
{\rm A}1) \begin{cases}
 \begin{aligned}
&\mathfrak{aff}(\R)\simeq{\rm Span}\{f_4\,e_1-c_3\,e_2,e_1-c_3\,e_5\}\,,\\
&\mathfrak{sl}(2,\R)\simeq{\rm Span}\{e_3,e_4,e_5\}\,.
\end{aligned}
\end{cases}
$$
and
$$
{\rm A}2) \begin{cases}
 \begin{aligned}
&\mathfrak{aff}(\R)\simeq{\rm Span}\{a_1\,e_1+ b_1\,e_2 + 2 e_5,e_1-c_3\,e_5\}\,,\\
&\mathfrak{sl}(2,\R)\simeq{\rm Span}\{e_3,e_4,e_5\}\,.
\end{aligned}
\end{cases}
$$

In the other cases we see that
$$
{\rm A}3),  {\rm A}4)  \cong \R^2 \ltimes {\mathfrak h}_3
$$
by using for A3)  the new basis
$$
\left \{ E_1 = a_1 e_1 + 2 e_5, E_2 = \frac{1}{a_1} e_2, E_j =  e_j, j =3,4,5\right \},
$$
with $\R^2 =  {\rm {Span}} \{ E_2, E_5 \}$, $ \mathfrak h_3 = {\rm {Span}} \{ E_1, E_3, E_4 \}$ and
$$
\ad_{E_2} = \begin{pmatrix} 1&0&0\\ 0&\frac 12&\frac{f_4}{a_1}\\ 0&-\frac{f_4}{a_1}&\frac12 \end{pmatrix}, \quad \ad_{E_5} = \begin{pmatrix} 0&0&0\\ 0&0&-1\\ 0&1&0 \end{pmatrix}.
$$
For  A4) we may choose the new basis
$$
\left \{ F_1 = a_1 e_1 + b_1 e_2 +  2 e_5, F_2 =\frac{1}{b_1} e_1, F_j =  e_j, j =3,4,5\right \},
$$
with $\R^2 =  {\rm {Span}} \{ F_2, F_5 \}$, $ \mathfrak h_3 = {\rm {Span}} \{ F_1, F_3, F_4 \}$ and
$$
\ad_{F_2} = \begin{pmatrix} -1&0&0\\ 0&-\frac12&-\frac{c_3}{b_1}\\ 0&\frac{c_3}{b_1}&-\frac12 \end{pmatrix}, \quad \ad_{F_5} = \begin{pmatrix} 0&0&0\\ 0&0&-1\\ 0&1&0 \end{pmatrix}.
$$

Now we study the case B. Taking into account \eqref{a=0} and  Proposition \ref{sasakisubalgebra}, we can write
\begin{equation} \label{structureeqB}
\begin{aligned}
&{\rm d}e^1=a_1\,e^{12}+a_6\,e^{34}\,,\\
&{\rm d}e^2=b_1e^{12}+b_6\,e^{34}\,,\\
&{\rm d}e^3=e^{45}+c_2\,e^{13}+c_3\,e^{14}+c_4\,e^{23}+c_5\,e^{24}\,,\\
&{\rm d}e^4=-e^{35}+f_2\,e^{13}+f_3\,e^{14}+f_4\,e^{23}+f_5\,e^{24}\,,\\
&{\rm d}e^5=2(e^{12}+e^{34})\,.
\end{aligned}
\end{equation}
Condition $N_{\Phi}=-{\rm d} e^5 \otimes e_5$ implies the following linear equations
$$
c_5=c_2-f_3-f_4\,,\,f_5=f_2+c_3+c_4\,,
$$
while ${\rm d}^2=0$ gives
$$
c_2=f_3\,,\,
f_2=-c_3\,,\,a_6=-2c_4\,,\,f_3=\frac12b_6\,.
$$
Hence the structure equations \eqref{structureeqB} of $\g$ reduces to
$$
\begin{aligned}
&{\rm d}e^1=a_1\,e^{12}-2c_4\,e^{34}\,,\\
&{\rm d}e^2=b_1\,e^{12}+b_6\,e^{34}\,,\\
&{\rm d}e^3=e^{45}+\frac12 b_6\,e^{13}+c_3\,e^{14}+c_4\,e^{23}-f_4\,e^{24}\,,\\
&{\rm d}e^4=-e^{35}-c_3\,e^{13}+\frac12 b_6\,e^{14}+f_4\,e^{23}+c_4\,e^{24}\,,\\
&{\rm d}e^5=2(e^{12}+e^{34})\,,
\end{aligned}
$$
where the structure constants are related by the equations
$$
\begin{aligned}
&c_4 (2c_4+a_1)=0\,,\,b_6 (a_1+2c_4)=0\,,\, c_4 (b_1- b_6)=0\,,\,b_6(b_6 -b_1)=0\\
&b_1c_4+\frac12 a_1b_6=0\,,\,c_3 a_1- b_1 f_4-2=0\,.
\end{aligned}
$$
This imposes to consider the following four cases:
$$
\begin{aligned}
{\rm B}1)\,\,&b_1=0\,,\, b_6=0\,,\,c_4=0\,,\,c_3=\frac{2}{a_1},\\
{\rm B}2)\,\, &b_6=0\,,\, c_4=0\,,\, f_4=\frac{c_3a_1-2}{b_1},\\
{\rm B}3)\,\,&b_1= 0\,,\,b_6=0\,,c_3=\frac{2}{a_1}\,,\,c_4= -\frac12 a_1,\\
{\rm B}4)\,\,& b_6=b_1\,,\,c_4=-\frac12 a_1\,,\, f_4= \frac{c_3a_1-2}{b_1}\,.
\end{aligned}
$$
In the first two cases we have
$$
{\rm B}1)\,,\,{\rm B}2)\simeq \mathfrak{aff}(\R)\times \mathfrak{su}(2)\,,
$$
where respectively
$$
{\rm B}1) \begin{cases}
 \begin{aligned}
&\mathfrak{aff}(\R)\simeq{\rm Span}\{a_1\,e_1+ e_5\,,f_4\,e_1+e_5\}\\
&\mathfrak{su}(2)\simeq{\rm Span}\{e_3,e_4,e_5\}
\end{aligned}
\end{cases}
$$
and
$$
{\rm B}2) \begin{cases}
 \begin{aligned}
&\mathfrak{aff}(\R)\simeq{\rm Span}\{a_1\,e_1+ b_1\,e_2 + 2 e_5,e_1+{c_3}\,e_5\}\\
&\mathfrak{su}(2)\simeq{\rm Span}\{e_3,e_4,e_5\} \,.
\end{aligned}
\end{cases}
$$
Again in the cases
${\rm B}3)$ and ${\rm B}4)$ $\g$ is solvable  and
$$
{\rm B}3), {\rm B}4)  \cong \R^2 \ltimes {\mathfrak h}_3\,
$$
by using for B3)  the new basis
$$
\left \{ G_1 = a_1 e_1 + 2 e_5, G_2 = \frac{1}{a_1} e_2, G_j =  e_j, j =3,4,5\right \},
$$
with $\R^2 =  {\rm {Span}} \{ G_2, G_5 \}$, $ \mathfrak h_3 = {\rm {Span}} \{ G_1, G_3, G_4 \}$ and
$$
\ad_{G_2} = \begin{pmatrix} 1&0&0\\ 0&\frac 12&\frac{f_4}{a_1}\\ 0&-\frac{f_4}{a_1}&\frac12 \end{pmatrix}, \quad \ad_{G_5} = \begin{pmatrix} 0&0&0\\ 0&0&1\\ 0&-1&0 \end{pmatrix}.
$$
For B4) we may choose the new basis
$$\left \{ H_1 = a_1 e_1 + b_1 e_2 +  2 e_5, H_2 =\frac{1}{b_1} e_1, H_j =  e_j, j =3,4,5\right\},
$$
with $\R^2 =  {\rm {Span}} \{ H_2, H_5 \}$, $ \mathfrak h_3 = {\rm {Span}} \{H_1, H_3, H_4 \}$ and
$$
\ad_{H_2} = \begin{pmatrix} -1&0&0\\ 0&-\frac12&-\frac{c_3}{b_1}\\ 0&\frac{c_3}{b_1}&-\frac12 \end{pmatrix}, \quad \ad_{H_5} = \begin{pmatrix} 0&0&0\\ 0&0&1\\ 0&-1&0 \end{pmatrix}.
$$

In the cases  ${\rm A}3)\,,\, \rm{A}4)\,,\, \rm{B}3)\,,\, \rm{B}4)$, the corresponding Lie algebras are all isomorphic to a semidirect product
$\mathfrak{g}_t=\R^2{\ltimes}_{\psi_t}\mathfrak{h}_3$, where $\R^2={\rm Span}\{X,Y\}$,  $\mathfrak{h_3}={\rm Span}\{v_1,v_2,v_3\}$ and
$$
[v_2,v_3]=-v_1\,,\quad \psi_t(X)= \begin{pmatrix} 1&0&0\\ 0&\frac12&t\\ 0&-t&\frac12 \end{pmatrix}, \quad\psi_t(Y)= \begin{pmatrix} 0&0&0\\ 0&0&-1\\ 0&1&0 \end{pmatrix}.
$$
By changing $X$ to $X+t\, Y$ we obtain an isomorphism between $\mathfrak{g}_t$ and $\g_0$ with structure equations \eqref{g0}.

To summarize we can state the following

\begin{theorem} \label{classtrivialcenter}
If  a $5$-dimensional Sasakian Lie algebra $\g$ has trivial center, then it is isomorphic to one of the following Lie algebras: the direct products $\mathfrak{sl}(2,\R)\times\mathfrak{aff}(\R)$, $\mathfrak{su}(2)\times\mathfrak{aff}(\R)$, or  the non-unimodular solvable Lie algebra $\g_0$.
\end{theorem}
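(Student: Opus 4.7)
The approach is to split into two cases according to whether $\g'=\g$ or $\g'\neq\g$. In the first case, by Section~5 of \cite{Diatta} the only $5$-dimensional contact Lie algebra with $\g'=\g$ is $\mathfrak{sl}(2,\R)\ltimes\R^2$, and the proposition preceding this theorem shows that this Lie algebra admits no Sasakian structure. Hence it suffices to treat the case $\g'\neq\g$, in which $\xi\notin\g'$ and, as already observed, $\dim\ker(\ad_\xi)|_{\ker\alpha}=\dim\mathrm{Im}\,\ad_\xi=2$.

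Under this assumption, I would choose an orthonormal basis $\{e_1,\dots,e_4\}$ of $\ker\alpha$ that simultaneously puts $\Phi|_{\ker\alpha}$ into its standard canonical form, realizes the decomposition $\ker\ad_\xi=\mathrm{Span}\{\xi,e_1,e_2\}$ and $\mathrm{Im}\,\ad_\xi=\mathrm{Span}\{e_3,e_4\}$, and reduces $(\ad_\xi)|_{\mathrm{Im}\,\ad_\xi}$ to a single $2\times 2$ block with parameters $(a,b)$; the fact that this block is a scalar plus a rotation (rather than two independent eigenvalue blocks) is exactly the content of $\Phi\,\ad_\xi=\ad_\xi\,\Phi$ from Proposition~\ref{kerad}. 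The Jacobi identity applied to $(\xi,e_3,e_4)$, combined with $\alpha([e_3,e_4])=-d\alpha(e_3,e_4)=-2$, forces $a=0$ and $\theta(e_3,e_4)\in\ker\ad_\xi\cap\ker\alpha$. A rescaling of $\alpha$ and $\xi$ then reduces to $b=\pm 1$, which splits the analysis into case A ($b=1$) and case B ($b=-1$).

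In each case I would write down the most general structure equations compatible with the data above together with Proposition~\ref{sasakisubalgebra}, which forces $[\ker\ad_\xi,\mathrm{Im}\,\ad_\xi]\subseteq\mathrm{Im}\,\ad_\xi$ and guarantees that $\ker\ad_\xi$ is itself a Sasakian subalgebra with non-trivial center (so the brackets among $e_1,e_2$ and the components of $\theta(e_3,e_4)$ take a restricted form). Imposing the normality condition $N_\Phi=-de^5\otimes e_5$ yields a small set of linear relations among the unknown structure constants; imposing $d^2=0$ yields additional linear and polynomial relations. Substituting back reduces the problem to a polynomial system in a handful of parameters $(a_1,b_1,c_2,c_3,c_4,f_4,\dots)$, whose complete solution set splits naturally into four families in each of cases A and B.

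The last step is to recognize the resulting Lie algebras up to isomorphism. In each case, two of the four subfamilies turn out to possess a pair of complementary ideals of dimensions $3$ and $2$, which I would identify explicitly as $\mathfrak{sl}(2,\R)\times\mathfrak{aff}(\R)$ (in case A) and $\mathfrak{su}(2)\times\mathfrak{aff}(\R)$ (in case B). The remaining two subfamilies in each case are solvable and, after a suitable change of basis, assume the uniform form $\g_t=\R^2\ltimes_{\psi_t}\mathfrak{h}_3$ for some parameter $t$; the substitution $X\mapsto X+tY$ on the $\R^2$-factor then collapses the entire family to $\g_0$, whose structure equations are \eqref{g0}. I expect the main obstacle to be purely computational: solving the polynomial constraints imposed by normality and Jacobi, and then guessing the correct changes of basis that exhibit each solvable subcase as $\g_0$ rather than leaving a spurious parameter in the final list.
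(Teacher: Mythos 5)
Your proposal follows the paper's own proof essentially step for step: the same split into $\g'=\g$ (disposed of via Diatta's classification together with the proposition that $\mathfrak{sl}(2,\R)\ltimes\R^2$ admits no Sasakian structure) versus $\g'\neq\g$, the same adapted orthonormal basis with $\ker\ad_\xi=\mathrm{Span}\{\xi,e_1,e_2\}$ and $\mathrm{Im}\,\ad_\xi=\mathrm{Span}\{e_3,e_4\}$, the same use of the Jacobi identity on $(\xi,e_3,e_4)$ to force $a=0$, the same rescaling to $b=\pm1$ giving cases A and B, and the same identification of the resulting four subfamilies in each case with $\mathfrak{sl}(2,\R)\times\mathfrak{aff}(\R)$, $\mathfrak{su}(2)\times\mathfrak{aff}(\R)$, and $\g_0$. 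The approach is correct and matches the paper's argument in \S\ref{sectrivialcenter}.
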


\begin{rem}\emph{
Note that $\g_0$ corresponds to the Lie algebra numbered 22 in the classification of $5$-dimensional solvable contact Lie algebras provided by Diatta in \cite{Diatta}.  }
\end{rem}

\

\section{$5$-dimensional Sasakian $\alpha$-Einstein Lie algebras}

In this section we study Sasakian $\alpha$-Einstein Lie algebras. A Sasakian Lie algebra $(\g,\Phi,\alpha,\xi,g)$ is called $\alpha$-Einstein if the Ricci tensor ${\rm Ric}_g$  of the metric $g$ satisfies ${\rm Ric}_g =  \lambda g +  \nu \,\alpha \otimes \alpha$ for some $\lambda, \,\nu\in\R$.

It is known that the canonical Sasakian structure on $\mathfrak{h}_5$ is $\alpha$-Einstein.  Furthermore, in view of \cite[Proposition 4.2]{AFFU} the Lie algebra $\g_0$ from Theorem \ref{classtrivialcenter} is the only solvable (non nilpotent) $5$-dimensional Lie algebra admitting a Sasakian $\alpha$-Einstein structure. Thus, in order to determine all the $5$-dimensional Lie algebras admitting such a structure, we only have to consider the non-solvable ones, which are $\mathfrak{sl}(2,\R)\times\mathfrak{aff}(\R)$ and $\mathfrak{su}(2)\times\mathfrak{aff}(\R)$ according to Theorems \ref{classwithcenter} and \ref{classtrivialcenter}.

\begin{prop}
The Lie algebra $\mathfrak{sl}(2,\R)\times\mathfrak{aff}(\R)$ admits Sasakian $\alpha$-Einstein structures, while there are none Sasakian $\alpha$-Einstein structures on $\mathfrak{su}(2)\times\mathfrak{aff}(\R)$.
\end{prop}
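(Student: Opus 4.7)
The plan is to analyze the $\alpha$-Einstein condition family by family using the classification of $5$-dimensional Sasakian Lie algebras with trivial center developed in Section \ref{sectrivialcenter}. There, every Sasakian structure on $\mathfrak{sl}(2,\R)\times\mathfrak{aff}(\R)$ (respectively $\mathfrak{su}(2)\times\mathfrak{aff}(\R)$) was shown to belong to one of the two families A1, A2 (respectively B1, B2), each presented in an orthonormal basis $\{e_1,\dots,e_5\}$ with explicit structure equations and algebraic constraints on the parameters $(a_1,b_1,c_3,f_4)$. Since any Sasakian metric satisfies ${\rm Ric}_g(\xi,X)=2n\,\alpha(X)$ and $g(\xi,X)=\alpha(X)$, the condition ${\rm Ric}_g=\lambda g+\nu\,\alpha\otimes\alpha$ is equivalent, together with $\lambda+\nu=4$, to the restriction ${\rm Ric}_g|_{\ker\alpha}=\lambda\,g|_{\ker\alpha}$, i.e.\ to the ten scalar equations ${\rm Ric}_g(e_i,e_j)=\lambda\,\delta_{ij}$ for $1\le i,j\le 4$.

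For each family I would substitute the structure equations into Koszul's formula $2g(\nabla_{e_i}e_j,e_k)=g([e_i,e_j],e_k)-g([e_j,e_k],e_i)+g([e_k,e_i],e_j)$ to read off the connection coefficients, and then compute ${\rm Ric}_g(e_i,e_j)=\sum_k g(R(e_k,e_i)e_j,e_k)$. The outcome in each family is a polynomial system in the free parameters $(a_1,b_1,c_3,f_4)$ (subject to the algebraic constraints already recorded in Section \ref{sectrivialcenter}) that is affine in $\lambda$.

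For the positive assertion on $\mathfrak{sl}(2,\R)\times\mathfrak{aff}(\R)$, I expect that within family A1 the symmetric choice $b_1=f_4=0$, $c_3=1$ (which forces $a_1=-2$) yields an $\alpha$-Einstein metric; verifying this reduces to plugging the values into the Ricci formulas and checking that all off-diagonal entries vanish and the diagonal entries agree. For the negative assertion on $\mathfrak{su}(2)\times\mathfrak{aff}(\R)$, the sign flip from $b=+1$ (case A) to $b=-1$ (case B) in the operator $\ad_\xi|_{{\rm Im}\,\ad_\xi}$ reverses the sign of several curvature contributions; I expect that eliminating $\lambda$ between two diagonal Ricci identities (for instance ${\rm Ric}_g(e_3,e_3)={\rm Ric}_g(e_1,e_1)$) and combining them with an off-diagonal vanishing condition produces a relation of the form $(\text{sum of squares})+(\text{positive constant})=0$, which admits no real solution and thus rules out every member of B1 and B2.

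The main obstacle is purely computational: the Ricci tensor must be worked out in four separate families, each depending on up to three free parameters, and this generates a moderately large polynomial system that one has to either solve or obstruct by hand. A secondary difficulty is cleanly identifying, within families B1 and B2, the specific combination of Ricci equations that produces the sum-of-squares obstruction, since this is the only structural reason (short of brute force) why $b=-1$ rules out the $\alpha$-Einstein condition while $b=+1$ permits it.
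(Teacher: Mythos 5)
Your plan is essentially the paper's proof: the paper likewise runs through the families A1, A2, B1, B2 from Section \ref{sectrivialcenter}, computes the Ricci tensor explicitly in the given orthonormal basis, and reads off the $\alpha$-Einstein condition. Your anticipated outcomes match the paper's computations — in A1 the condition reduces to $c_3=\pm1$ (and in A2 to $a_1^2+b_1^2=2$), while in B1 and B2 the Ricci tensor has two diagonal entries equal to $-(2+a_1^2)$ or $-(2+a_1^2+b_1^2)$ and two equal to $0$, so the required equality is exactly the unsolvable \lq\lq sum of squares plus positive constant equals zero\rq\rq\ relation you predict.
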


\begin{proof}
All Sasakian structures on $\mathfrak{sl}(2,\R)\times\mathfrak{aff}(\R)$ are described in the cases A1), A2) given in \S\ref{sectrivialcenter}.
Performing standard computations we obtain that in the case A1) the Ricci tensor is given by the following matrix
$$
{\rm Ric}_g=\begin{pmatrix}
-\frac{2c_3^2+2}{c_3^2} &0 &0  &0 &0\\
0 &-\frac{2c_3^2+2}{c_3^2} &0  &0 &0\\
0 &0 &-4  &0 &0\\
0 &0 &0  &-4 &0\\
0 &0 &0  &0 &4\\
\end{pmatrix}\,.
$$
Therefore for $c_3=\pm 1$ we see that the Sasakian structure is $\alpha$-Einstein. In the case A2) we have
$$
{\rm Ric}_g=\begin{pmatrix}
-(2+a_1^2+b_1^2) &0 &0  &0 &0\\
0 &-(2+a_1^2+b_1^2) &0  &0 &0\\
0 &0 &-4  &0 &0\\
0 &0 &0  &-4 &0\\
0 &0 &0  &0 &4\\
\end{pmatrix}\,.
$$
Therefore for $a_1^2+b_1^2=2$ we obtain that the Sasakian structure is $\alpha$-Einstein.

We consider now the Lie algebra $\mathfrak{su}(2)\times\mathfrak{aff}(\R)$. For this algebra the Sasakian structures
are described by the cases B1),
B2) studied in \S\ref{sectrivialcenter}. In the case B1) the Ricci tensor is given by
$$
{\rm Ric}_g=\begin{pmatrix}
-(2+a_1^2) &0 &0  &0 &0\\
0 &-(2+a_1^2) &0  &0 &0\\
0 &0 &0  &0 &0\\
0 &0 &0  &0 &0\\
0 &0 &0  &0 &4\\
\end{pmatrix}\,,
$$
whereas in the case B2) it is given by
$$
{\rm Ric}_g=\begin{pmatrix}
-(2+a_1^2+b_1^2) &0 &0  &0 &0\\
0 &-(2+a_1^2+b_1^2) &0  &0 &0\\
0 &0 &0  &0 &0\\
0 &0 &0  &0 &0\\
0 &0 &0  &0 &4\\
\end{pmatrix}\,.
$$
As a consequence the Sasakian structures on this Lie algebra never satisfy the $\alpha$-Einstein condition.
\end{proof}

To sum up, we can now state the following
\begin{theorem}
The only $5$-dimensional Lie algebras admitting a Sasakian $\alpha$-Einstein structure are $\mathfrak{h}_5$, $\g_0$ and $\mathfrak{sl}(2,\R)\times\mathfrak{aff}(\R)$.
\end{theorem}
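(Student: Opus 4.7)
The plan is to collate the statement from the classification results already proved in Theorems \ref{classwithcenter} and \ref{classtrivialcenter} together with the $\alpha$-Einstein information established just above and in \cite{AFFU}. No new structural analysis is required; the proof should amount to a case-by-case inventory.

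First, I would partition all $5$-dimensional Sasakian Lie algebras into three strata according to their algebraic nature. The \emph{nilpotent} stratum contains only $\mathfrak h_5$, by the earlier theorem on $(2n+1)$-dimensional nilpotent Sasakian Lie algebras. The \emph{solvable non-nilpotent} stratum consists of $\g_0$ from Theorem \ref{classtrivialcenter} together with the Lie algebras $\g_2,\ldots,\g_8^\delta$ of Theorem \ref{classwithcenter}, all of which are readily verified to be solvable but not nilpotent from their explicit presentations. The \emph{non-solvable} stratum consists, by Theorem \ref{classtrivialcenter}, precisely of $\mathfrak{sl}(2,\R)\times\mathfrak{aff}(\R)$ and $\mathfrak{su}(2)\times\mathfrak{aff}(\R)$.

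Next I would address each stratum. For $\mathfrak h_5$, I would invoke the remark following the theorem on nilpotent Sasakian Lie algebras, recalling that the standard contact Calabi--Yau structure on $\mathfrak h_{2n+1}$ is Sasakian $\alpha$-Einstein. For the solvable non-nilpotent stratum, the key input is \cite[Proposition 4.2]{AFFU}, which asserts that $\g_0$ is the \emph{only} solvable (non-nilpotent) $5$-dimensional Lie algebra carrying a Sasakian $\alpha$-Einstein structure; this single citation simultaneously verifies the case $\g_0$ and rules out $\g_2,\ldots,\g_8^\delta$. For the non-solvable stratum, the proposition immediately preceding exhibits explicit $\alpha$-Einstein Sasakian structures on $\mathfrak{sl}(2,\R)\times\mathfrak{aff}(\R)$ and shows that no such structure exists on $\mathfrak{su}(2)\times\mathfrak{aff}(\R)$. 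Collecting the three verdicts yields exactly the list $\{\mathfrak h_5,\g_0,\mathfrak{sl}(2,\R)\times\mathfrak{aff}(\R)\}$.

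Since the theorem is essentially an assembly of prior results, there is no substantial obstacle. The only potentially hard part would arise if one wished to prove the theorem without invoking \cite[Proposition 4.2]{AFFU}: one would then need to eliminate the eight families $\g_2,\ldots,\g_8^\delta$ by hand, computing $\mathrm{Ric}_g$ for each compatible Sasakian metric (which is essentially pinned down up to an orthonormal frame adapted to $\Phi$ and $\xi$) and verifying the inconsistency of $\mathrm{Ric}_g=\lambda g+\nu\,\alpha\otimes\alpha$ for every choice of $\lambda,\nu\in\R$. This is routine but tedious, in the same spirit as the Ricci computations carried out explicitly for the non-solvable case in the preceding proposition.
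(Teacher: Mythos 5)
Your proposal is correct and follows essentially the same route as the paper: the paper likewise records that $\mathfrak h_5$ carries an $\alpha$-Einstein Sasakian structure, invokes \cite[Proposition 4.2]{AFFU} to settle the solvable non-nilpotent stratum (singling out $\g_0$ and excluding $\g_2,\dots,\g_8^{\delta}$), and handles the two non-solvable algebras by the explicit Ricci computations of the preceding proposition. Nothing is missing.
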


\end{document}